\documentclass[10pt,reqno]{amsart}

\textwidth=6.5in
\oddsidemargin=0pt
\evensidemargin=0pt
\hoffset=0in

\usepackage{amssymb}
\usepackage{amscd}
\usepackage{amsfonts}
\usepackage{setspace}
\usepackage{version}
\usepackage{enumerate}
\usepackage{hyperref}



\newtheorem{theorem}{Theorem}[section]
\newtheorem{lemma}[theorem]{Lemma}
\newtheorem{proposition}[theorem]{Proposition}
\newtheorem{corollary}[theorem]{Corollary}

\theoremstyle{definition}
\newtheorem{question}[theorem]{Question}
\newtheorem{conjecture}[theorem]{Conjecture}

\newtheorem{remark}[theorem]{Remark}

\renewcommand{\(}{\left(}
\renewcommand{\)}{\right)}

\newcommand{\trace}[0]{\operatorname{Tr}}

\def\R{\mathbb{R}}
\def\C{\mathbb{C}}
\def\Z{\mathbb{Z}}
\def\E{\mathbb{E}}

\def\N{\mathbb{N}}

\def\cO{\mathcal{O}}
\def\eps{\varepsilon}

\newcommand{\bigE}{\mbox{\LARGE$\E$}}

\renewcommand{\hat}{\widehat}
\newcommand{\sump}{\sideset{}{'}\sum}

\numberwithin{equation}{section}

\begin{document}

\title{\mbox{Additive triples of bijections}, or \mbox{the toroidal semiqueens problem}}



\author{Sean Eberhard}
\address{Sean Eberhard, Mathematical Institute, University of Oxford}
\email{eberhard@maths.ox.ac.uk}

\author{Freddie Manners}
\address{Freddie Manners, Mathematical Institute, University of Oxford}
\email{manners@maths.ox.ac.uk}

\author{Rudi Mrazovi\'c}
\address{Rudi Mrazovi\'c, Mathematical Institute, University of Oxford}
\email{mrazovic@maths.ox.ac.uk}

\onehalfspace


\begin{abstract}
We prove an asymptotic for the number of additive triples of bijections $\{1,\dots,n\}\to\Z/n\Z$, that is, the number of pairs of bijections $\pi_1,\pi_2\colon \{1,\dots,n\}\to\Z/n\Z$ such that the pointwise sum $\pi_1+\pi_2$ is also a bijection. This problem is equivalent to counting the number of orthomorphisms or complete mappings of $\Z/n\Z$, to counting the number of arrangements of $n$ mutually nonattacking semiqueens on an $n\times n$ toroidal chessboard, and to counting the number of transversals in a cyclic Latin square. The method of proof is a version of the Hardy--Littlewood circle method from analytic number theory, adapted to the group $(\Z/n\Z)^n$. 
\end{abstract}

\maketitle
\tableofcontents

\section{Introduction}

Let $S$ be the set of bijections $\{1,\dots,n\}\to\Z/n\Z$, thought of as a subset of the group $(\Z/n\Z)^n$. We are interested in counting the number $s_n$ of additive triples in $S$, that is, the number of pairs of bijections $\pi_1,\pi_2\colon \{1,\dots,n\}\to\Z/n\Z$ such that the pointwise sum $\pi_1+\pi_2$ is also a bijection.

The number $s_n$ has been studied somewhat extensively, but under a different guise. Since $S$ is invariant under precomposition with permutations of $\{1,\dots,n\}$ it is easy to see by arbitrarily identifying $\{1,\dots,n\}$ with $\Z/n\Z$ that $s_n/n!$ is number of permutations $\pi$ of $\Z/n\Z$ such that $x\mapsto \pi(x)-x$ is also a permutation. Such maps are called orthomorphisms or complete mappings, and afford the following fun interpretation. Define a semiqueen to be a chess piece which can move any distance horizontally, vertically, or diagonally in the northeast-southwest direction. Then orthomorphisms represent ways of arranging $n$ mutually nonattacking semiqueens on an $n\times n$ toroidal chessboard. Thus $s_n/n!$ is the number of such arrangements.

In another guise this problem is that of counting the number of transversals of a cyclic Latin square. Here a transversal of an $n\times n$ Latin square is a set of $n$ squares with no two sharing the same row, column, or symbol, and the cyclic Latin square is the Latin square with $(i,j)$ entry given by $i+j\pmod{n}$. Then as above the number of such transversals is $s_n/n!$.

If $n$ is even then $s_n=0$. Indeed, in this case for any bijection $\pi\colon \{1,\dots,n\}\to \Z/n\Z$ we have $\sum_{i=1}^n \pi(i) = n/2$, so the sum of two bijections is never again a bijection. If $n$ is odd then it is easy to see that $s_n>0$, but estimating $s_n$ is not easy.

In 1991, Vardi~\cite{vardi} made a conjecture equivalent to the following.

\begin{conjecture}\label{vardi}
There are constants $c_1>0$ and $c_2<1$ such that for any large enough odd number $n$ we have
\[
	c_1^nn!^2 \leq s_n \leq c_2^nn!^2.
\]
\end{conjecture}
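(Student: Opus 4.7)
The plan is to apply the Hardy--Littlewood circle method on $G := (\Z/n\Z)^n$. Viewing $S$ as a subset of $G$, orthogonality of characters gives
\[
	s_n = \frac{1}{n^n}\sum_{\xi \in G}\hat{1_S}(\xi)^2\,\overline{\hat{1_S}(\xi)},\qquad \hat{1_S}(\xi) = \sum_{\pi\in S}\exp\(2\pi i\,\xi\cdot\pi/n\).
\]
The trivial frequency $\xi=0$ alone contributes $|S|^3/n^n = (n!)^3/n^n \sim \sqrt{2\pi n}\,(n!)^2/e^n$ by Stirling, so if this ``main term'' dominates, Vardi's conjecture follows with both $c_1$ and $c_2$ tending to $1/e$ (and one obtains in fact a sharp asymptotic).

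To make this rigorous I would partition $G$ into major arcs $\mathfrak{M}$ and minor arcs $\mathfrak{m}$. The major arcs should consist of frequencies $\xi$ with some rational structure --- for example, those whose coordinates all lie in a small fixed subgroup of $\Z/n\Z$, or are small multiples of a common denominator --- on which $\hat{1_S}(\xi)$ should factorise and be computable (giving a ``singular series'' contribution). The lower bound in Vardi's conjecture reduces to evaluating the major-arc sum, which should comfortably exceed $c_1^n(n!)^2$; the upper bound reduces to the minor-arc estimate $\sum_{\xi\in\mathfrak{m}}|\hat{1_S}(\xi)|^3 \leq c_2^n\,n^n\,(n!)^2$ for some $c_2<1$.

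A useful reformulation is that $\hat{1_S}(\xi) = \operatorname{perm}(M_\xi)$ is the permanent of the $n\times n$ matrix $M_\xi$ with entries $\exp(2\pi i\,\xi_i j/n)$. Whenever $\xi_i\neq 0$, the $i$-th row of $M_\xi$ sums to zero, so one expects exponential cancellation in the permanent. I expect the minor-arc bound to be the main obstacle: one needs an estimate of the form $|\hat{1_S}(\xi)|\leq \theta(\xi)^n\,n!$ with $\theta(\xi)$ bounded strictly below $1$ on minor arcs, and tight enough to survive summation over the up to $n^n$ frequencies in $\mathfrak{m}$. Since Bregman's theorem only yields the trivial bound $|\hat{1_S}(\xi)|\leq n!$, genuinely new exponential-sum or permanent inequalities --- the analogues of Weyl or Vinogradov estimates in the classical circle method --- will be required, and these are where the real work of the paper must lie.
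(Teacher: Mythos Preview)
Your high-level framework---a circle method on $(\Z/n\Z)^n$ via the identity $s_n = n^{-n}\sum_\xi \hat{1_S}(\xi)^3$---is exactly the one the paper uses, and your permanent interpretation is correct. But the proposal is a plan rather than a proof, and the plan already goes wrong at the major-arc step.

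Your description of the major arcs (``coordinates all lie in a small fixed subgroup'' or ``small multiples of a common denominator'') is not the right one and would not isolate the main term. The paper's major arcs are the \emph{low-entropy} characters: those $\xi=(r_1,\ldots,r_n)$ in which almost all coordinates are \emph{equal} to a single value, which after a global shift one may take to be $0$. In particular the $n$ constant characters $\xi=(t,\ldots,t)$ each satisfy $|\hat{1_S}(\xi)|=n!$ (since $\sum_j \pi(j)\equiv 0$ for odd $n$), so the ``trivial'' contribution is already $n\cdot (n!)^3/n^n = (n!)^3/n^{n-1}$, a factor of $n$ larger than what you wrote; the further characters with $O(1)$ nonzero coordinates produce the $e^{-1/2}$ singular-series constant. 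A subgroup-based decomposition would miss this structure entirely.

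The substantive gap is the minor-arc estimate, which you correctly flag as ``where the real work must lie'' but do not attempt. The paper needs four separate ingredients, none of which is a routine Weyl-type bound: an ``entropy'' $L^2$ bound from Parseval and permutation invariance; a square-root cancellation bound $|\hat{1_S}(\chi)|\le e^{o(n)}\binom{n}{a_1,\ldots,a_k}^{-1/2}(n!/n^n)$ proved via a Gaussian-integral identity; a ``sparseval'' $L^2$ bound over $m$-sparse characters obtained by a saddle-point analysis; and a recursive $L^\infty$ bound for sparse characters that crucially uses the oddness of $n$ to gain an extra factor of $2^{-m/2}$. Without at least the analogue of the square-root cancellation bound, no amount of optimizing the major/minor split will close the argument, since there are characters (e.g.\ $\chi=(r^{n/3},(-r)^{n/3},0^{n/3})$) for which the trivial and entropy bounds are both too weak by an exponential factor.
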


The upper bound in this conjecture is known, and various authors have made incremental improvements to the constant $c_2$. Cooper and Kovalenko~\cite{cooperkovalenko} showed that $c_2 = e^{-0.08854}$ is acceptable, and this was later improved by Kovalenko~\cite{kovalenko} to $c_2 = 1/\sqrt{2}$ and by McKay, McLeod and Wanless~\cite{mckay} to $c_2 = 0.614$. More recently, Taranenko~\cite{taranenko} proved that one can take $c_2 = 1/e + o(1)$. Glebov and Luria~\cite{glebovluria} proved the same bound using a somewhat simpler method based on entropy.

There has been much less progress on the lower bound, but nontrivial lower bounds for $s_n$ have been proved under various arithmetic assumptions about $n$. For example, Cooper~\cite{cooperlower} proved that $s_n \geq e^{\frac12\sqrt{n}\log n}n!$ under the hypothesis that $n$ has a divisor of size roughly $\sqrt{n}$, while if, say, $n$ is prime then a lower bound of Rivin, Vardi, and Zimmermann~\cite{rivinvardizimmermann} for the torodial queens problem gives $s_n \geq 2^{\sqrt{(n-1)/2}}n!$. These lower bounds were recently superseded by work of Cavenagh and Wanless~\cite{cavenaghwanless}, which showed that $s_n\geq 3.246^n n!$ for all odd $n$. However, this lower bound is still a long way from the one in Conjecture~\ref{vardi}.

Some researchers have also tried investigating the growth rate of $s_n$ numerically: see Cooper, Gilchrist, Kovalenko, and Novakovi\'{c}~\cite{coopersimulations} and Kuznetsov~\cite{kuznetsovone, kuznetsovtwo, kuznetsovthree}. Based on this numerical evidence, Wanless~\cite{wanless} conjectured that we can take $c_1,c_2 = 1/e+o(1)$; specifically he conjectured that
\[
  \lim_{n\to\infty} \frac1n \log(s_n/n!^2) = -1.
\]
Finally, we should mention that $s_n/(n\cdot n!)$ is sequence A003111 in \cite{oeis}.


In this paper we prove Vardi's conjecture with the optimal values $c_1,c_2 = 1/e + o(1)$, thus also confirming Wanless's conjecture. In fact our estimate is much more precise -- we compute $s_n$ up to a factor of $1+o(1)$.

\begin{theorem}\label{supercooltheorembetterthanfermatslast}
Let $n$ be an odd integer. Then
\[
	s_n = (e^{-1/2}+o(1))n!^3/n^{n-1}.
\]
\end{theorem}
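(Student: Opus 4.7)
The plan is to apply the Hardy--Littlewood circle method on the group $G = (\Z/n\Z)^n$. By Fourier inversion,
\[
s_n = \frac{1}{n^n}\sum_{\xi \in G} \widehat{1_S}(\xi)^2\, \widehat{1_S}(-\xi),
\]
where $\widehat{1_S}(\xi) = \sum_{\pi \in S} e^{2\pi i \xi \cdot \pi/n}$. Two structural observations reduce the problem considerably: (i) $\widehat{1_S}(\xi + c\mathbf{1}) = \widehat{1_S}(\xi)$, because $\sum_j \pi(j) = n(n-1)/2 \equiv 0 \pmod{n}$ for odd $n$, so we may work modulo the diagonal subgroup; and (ii) $\widehat{1_S}(\xi)$ depends only on the multiset of entries of $\xi$. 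The trivial character contributes $\widehat{1_S}(0) = n!$, and together with its $n$ diagonal shifts accounts for the naive main term $(n!)^3/n^{n-1}$.

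I would decompose the remaining characters by the shape of their multiset, identifying as the \emph{major arcs} those cosets whose (suitably shifted) representative has nontrivial entries pairing into $(t,-t)$ pairs. Concretely, a ``$k$-pair coset'' has support $2k$ in its smallest representative, with entries $\{\pm t_1,\dots,\pm t_k\}$. For such a coset, the leading-order Fourier coefficient is $\widehat{1_S}(\xi)/n! \approx (-1/(n-1))^k$, an ``independent pair'' approximation that can be made rigorous via inclusion--exclusion on the partition lattice of $[2k]$. After counting approximately $n!/((n-2k)!\,2^k\,k!)\cdot(n-1)^k \sim n^{3k}/(2^k k!)$ cosets of this type (modulo the natural symmetries of pair ordering and orientation), each $k$-pair stratum contributes $(-1/2)^k/k! + o(1)$. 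Summing over $k$ yields the target constant via
\[
\sum_{k=0}^{\infty} \frac{(-1/2)^k}{k!} = e^{-1/2}.
\]
This has a pleasant probabilistic interpretation: it is the probability that $\sigma = \pi_1 + \pi_2$ has no ``excess coincidences'' $\sigma(i) = \sigma(j)$ relative to a uniform surrogate on $\{x \in G : \sum_j x_j \equiv 0\}$, with the excess modelled as an independent Poisson variable of mean $1/2$.

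The \emph{minor arcs} consist of the remaining cosets -- those with odd support, or support $2k$ lacking the required pairing structure. For these I need a pointwise bound $|\widehat{1_S}(\xi)| \ll n!$ strong enough to make their combined contribution $o(1)$. This is the main technical obstacle: $\widehat{1_S}(\xi) = \operatorname{per}(M_\xi)$ is the permanent of the matrix $(M_\xi)_{ij} = e^{2\pi i \xi_j i/n}$ whose entries lie on the unit circle, and obtaining sufficiently strong bounds on such permanents -- especially for the intermediate ``near-pair'' cosets that sit between the cleanly structured major arcs and the truly generic characters -- requires careful combinatorial analysis, likely a refinement of the partition-lattice computation used for the major arcs together with more robust exponential sum estimates in the generic case.
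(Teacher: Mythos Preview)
Your overall strategy and your major-arc computation are essentially the paper's: the identification of the leading characters as those whose nonzero entries pair off as $(t,-t)$, the partition-lattice/M\"obius computation giving $\hat{1_S}\approx(-1)^k n^{-k}\cdot n!/n^n$ for a $k$-pair character, and the summation to $e^{-1/2}$ all appear in Section~3 (Proposition~\ref{majorarcs}) in almost the same form.

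The genuine gap is that you do not actually bound the minor arcs; you only describe what would be needed. In the paper this is the bulk of the work, and it requires three separate ingredients that your proposal does not supply. First, for characters of bounded entropy (e.g.\ $\chi=(r^{n/3},(-r)^{n/3},0^{n/3})$) one needs a pointwise bound beating the trivial one by a factor like $e^{-cn}$; the paper obtains this (Theorem~\ref{SRH}) via a Gaussian-integral identity and Cauchy--Schwarz, not via the permanent interpretation you mention. Second, for sparse characters one needs an $L^2$ bound over all $m$-sparse $\chi$ that improves substantially on Parseval when $m$ is small compared to $n$; this ``sparseval'' bound (Theorem~\ref{sparseval}) is proved by an exact inclusion--exclusion formula followed by a saddle-point/contour-integral estimate. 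Third, and most delicately, one needs an $L^\infty$ bound for $m$-sparse characters that saves an extra factor of $2^{-m/2}$ (Proposition~\ref{rudiLinfty}); this is obtained from a recursive identity for $\hat{1_S}$ and is the only place where the hypothesis that $n$ is odd is used in an essential way. Without this $2^{-m/2}$, the low-entropy minor arcs do not close.

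There is also a structural point you glide over: your ``$k$-pair'' characters for $k$ growing with $n$ (say $k=\eps n$) are \emph{not} major arcs. Your asymptotic $\hat{1_S}/n!\approx(-1/(n-1))^k$ is only valid for bounded $k$, and the errors hidden in that ``$\approx$'' are exactly what the sparseval and $L^\infty$ bounds above are designed to control. So the minor-arc problem is not merely ``odd support or non-pairing support'': it includes the tail of your own major-arc sum.
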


Perhaps the most surprising feature of this estimate is the appearance of the constant $e^{-1/2}$. This constant arises as the sum of the singular series in an argument resembling the circle method from analytic number theory, but it can be rationalized heuristically as follows. If $\pi_1,\pi_2\colon \{1,\dots,n\}\to\Z/n\Z$ are random bijections then the sum $f=\pi_1+\pi_2$ is something like a random function subject to $\sum_{x\in\Z/n\Z} f(x)=0$, and so we might guess that the probability $\pi_1+\pi_2$ is a bijection is about $n\cdot n!/n^n$. However if we fix two elements $x,y\in\Z/n\Z$ then the difference
\[
	f(x)-f(y) = (\pi_1(x) - \pi_1(y)) + (\pi_2(x) - \pi_2(y))
\]
is the sum of two uniformly random nonzero elements, so $f(x)=f(y)$ with probability $1/(n-1)$, not $1/n$. Thus the probability that $f(x)\neq f(y)$ is smaller than we previously suggested by a factor of
\[
	\frac{1-1/(n-1)}{1-1/n} = 1 - 1/n^2 + O(1/n^3).
\]
There are a total of $n^2/2 + O(n)$ pairs $x,y$, so we might guess that the probability $\pi_1+\pi_2$ is a bijection is more like
\[
	\(1-1/n^2\)^{n^2/2} n\cdot n!/n^n \approx e^{-1/2} n!/n^{n-1}.
\]

Our proof applies with only notational modifications when $\Z/n\Z$ is replaced by any abelian group $G$ of odd order. To be precise, given a finite abelian group $G$, let $s(G)$ be the number of pairs of bijections $\pi_1,\pi_2:\{1,\dots,|G|\}\to G$ such that $\pi_1+\pi_2$ is also a bijection. Then by the same method which proves Theorem~\ref{supercooltheorembetterthanfermatslast}, we have the following theorem.
\begin{theorem}
Let $G$ be a finite abelian group of odd order $n$. Then
\[
  s(G) = (e^{-1/2}+o(1)) n!^3/n^{n-1}.
\]
\end{theorem}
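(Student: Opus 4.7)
The plan is to rerun the circle-method proof of Theorem~\ref{supercooltheorembetterthanfermatslast} with $\Z/n\Z$ replaced throughout by the abstract abelian group $G$ of odd order $n$. Writing $S\subseteq G^n$ for the set of bijections $\{1,\dots,n\}\to G$, Plancherel on $G^n$ applied to $s(G)=\langle 1_S*1_S,\,1_S\rangle$ gives
\[
s(G) = \frac{1}{|G|^n}\sum_{\chi\in\widehat{G}^n} \widehat{1_S}(\chi)^2\,\overline{\widehat{1_S}(\chi)},
\]
with $\chi=(\chi_1,\dots,\chi_n)$ ranging over characters of $G^n$. Fixing any enumeration $G=\{g_1,\dots,g_n\}$, one recognises
\[
\widehat{1_S}(\chi) = \sum_{\pi\in S}\prod_{i=1}^n \overline{\chi_i(\pi(i))} = \operatorname{perm}\bigl[\,\overline{\chi_i(g_j)}\,\bigr]_{i,j}
\]
as the permanent of the conjugated character-value matrix. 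The only formal change from the cyclic setup is that $\widehat{G}$ is no longer canonically identified with $G$; the argument otherwise uses only Pontryagin duality and the orthogonality relations $\sum_{g\in G}\chi(g)=n\,[\chi=1]$, which hold in any finite abelian group.

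I would then split the character sum into major arcs (those $\chi$ with only a bounded number of non-trivial coordinates $\chi_i$, drawn from some short list) and minor arcs. On the major arcs the permanent expansion reduces to a combinatorial sum depending only on the multiplicities of characters appearing in $(\chi_1,\dots,\chi_n)$ and on $n$; assembling these contributions should reproduce the singular-series constant $e^{-1/2}$ exactly as in the cyclic case. Nothing in this step uses cyclicity: the heuristic derivation of $e^{-1/2}$ given after Theorem~\ref{supercooltheorembetterthanfermatslast}, which hinges on $\pi_i(x)-\pi_i(y)$ being uniform on $G\setminus\{0\}$ for a random bijection $\pi_i$, is itself manifestly group-theoretic.

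The substantive step, and the only place where I anticipate needing any genuine care, is the minor-arc bound: one must show that $\sum_{\chi\text{ minor}}|\widehat{1_S}(\chi)|^3$ is negligible against $|G|^n$ times the main term. The cyclic argument presumably rests on bounds for permanents of matrices with roots-of-unity entries, combined with character-sum inputs; all of these ingredients are formulated purely in terms of $G$ and $\widehat{G}$, and the odd order of $G$ is used only to guarantee $S\neq\emptyset$ and to avoid the sign obstructions that force $s_n=0$ for even $n$. Each inequality therefore carries over to an arbitrary odd-order abelian $G$ verbatim, and the theorem follows by assembling the arc estimates as before. The only task is to verify that no step of the cyclic minor-arc argument secretly exploits a particular identification of $\widehat{\Z/n\Z}$ with $\Z/n\Z$; any such step can be bypassed either by reformulating it intrinsically on $\widehat{G}$ or by decomposing $G\cong\prod_k\Z/n_k\Z$ and running the same estimate coordinatewise.
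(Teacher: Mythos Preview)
Your overall plan coincides with the paper's: the paper simply asserts that the proof of Theorem~\ref{supercooltheorembetterthanfermatslast} goes through for any abelian $G$ of odd order with only notational changes, and you propose to do exactly this. In that sense there is nothing to compare.

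However, there is a genuine gap in your account of \emph{where} the odd-order hypothesis enters. You write that odd order ``is used only to guarantee $S\neq\emptyset$ and to avoid the sign obstructions that force $s_n=0$ for even $n$.'' This is incorrect. The condition $\sum_{x\in G}x=0$ (which is strictly weaker than odd order) already suffices for $s(G)>0$ and for the shift-invariance arguments underpinning the major-arc analysis. The full strength of odd order is needed at exactly one point: the $L^\infty$ bound for sparse characters (Proposition~\ref{rudiLinfty}). There the recursion~\eqref{fivepointone} is applied after reordering so that $r_m$ has as few ``negatives'' among $r_1,\dots,r_{m-1}$ as possible, and the crucial observation is that if $r_j=-r_m$ then $r_j\neq r_m$ because $\widehat{G}$ has no $2$-torsion. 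This is what forces $|\mathcal{N}_m|\le m/2$ and yields the decisive $2^{-m/2}$ saving; without it the low-entropy minor-arc contribution would swamp the main term. The paper flags this explicitly in the remark following Proposition~\ref{rudiLinfty}.

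A second, lesser point: your sketch of the minor-arc machinery (``bounds for permanents of matrices with roots-of-unity entries, combined with character-sum inputs'') does not match what the paper actually does. The four ingredients are the entropy bound (Proposition~\ref{prop:entropy}), the Gaussian-integral square-root cancellation (Theorem~\ref{SRH}), the sparseval bound (Theorem~\ref{sparseval}), and the recursive $L^\infty$ bound just discussed. Each of these is already formulated intrinsically in terms of $G$ and $\widehat{G}$, so no coordinatewise decomposition $G\cong\prod_k\Z/n_k\Z$ is needed or used; your proposed fallback of ``running the same estimate coordinatewise'' would not obviously recover, for instance, the AM--GM step in the proof of Theorem~\ref{SRH}.
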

There are additional complications however when $G$ is allowed to have even order, say when $G = (\Z/2\Z)^d$, and we do not know whether the same method can be made to work in this case.

Like the toroidal semiqueens problem, the toroidal queens problem -- the problem of counting the number of ways of arranging $n$ mutually nonattacking queens on an $n \times n$ toroidal chessboard -- is equivalent to counting the number of solutions to a particular linear system in the set of bijections, namely the system
\begin{align*}
\pi_1+\pi_2&=\pi_3,\\
\pi_1-\pi_2&=\pi_4.
\end{align*}
Linear systems of complexity $2$ like this one are known not to be controlled by Fourier analysis alone, but there is some hope that one could use the higher-order theory pioneered by Gowers. This is an interesting avenue which has not yet been fully explored.

\emph{Notation.} Although we have already implicitly introduced most of the notation and conventions, we include them here for the reader's convenience. We use the standard $O$-notation. To be concrete, for functions $f,g\colon \N\to \R$ we write $f(n)=O(g(n))$ if $|f(n)|\leq Cg(n)$ for some constant $C>0$, or $f(n)\leq O_X(g(n))$ if $C$ is allowed to depend on the parameter $X$. We write $f(n)=o(g(n))$ if $f(n)/g(n)\to 0$ as $n$ tends to infinity. For a finite set $S$ and function $f$, we will denote the average of $f$ on the set $S$ by $\E_{s\in S}f(s)$. We use a primed sum $\sum_{s_1,\dots,s_k\in S}'$ to denote the sum over all $k$-tuples of distinct elements $s_1,\dots, s_k\in S$. All the groups we will work with will be finite, and we equip each of them with the either uniform probability or counting measure, depending on whether we are working on the physical or the frequency side, respectively. Of course, this convention is respected in our definitions of convolution, inner product and $L^2$-norm. Finally, we will also use the standard notation $e(x) = e^{2\pi i x}$.

\emph{Acknowledgements.} We would like to thank Ben Green for communicating the problem of estimating $s_n$, Robin Pemantle for a discussion in connection with the method used to prove Theorem~\ref{SRH}, and Fernando Shao for useful conversations.

\section{Outline of the proof}\label{idiotsguide}

Our approach to proving Theorem~\ref{supercooltheorembetterthanfermatslast} is Fourier-analytic. Write $G = \Z/n\Z$ for $n$ an odd integer, and write $S \subset G^n$ for the set of bijections $\{1,\dots,n\} \to G$.  Our goal is to compute the quantity
\[
  \E_{x,y \in G^n} 1_S(x) 1_S(y) 1_S(x+y) = \langle 1_S \ast 1_S, 1_S \rangle.
\]
A standard application of Parseval's identity shows that this quantity can be expressed in terms of the Fourier transform of $1_S$ as
\begin{equation} \label{eqn:summing-triples}
  \sum_{\chi \in \widehat{G}^n} |\widehat{1_S}(\chi)|^2 \widehat{1_S}(\chi).
\end{equation}
Here we identify $\widehat{G}$ with $\frac1n\Z/\Z$ in the usual way, so that for $\chi = (r_1, \dots, r_n) \in (\frac1n\Z/\Z)^n$ we have explicitly
\begin{equation}
  \label{eqn:ft-defn}
  \widehat{1_S}(r_1,\dots,r_n) = \frac1{n^n} \sump_{x_1,\dots,x_n} e(-r_1x_1-\cdots-r_nx_n),
\end{equation}
where, as mentioned in the previous section, we use $\sum'$ to denote the sum over distinct $x_1, \dots, x_n \in G$, i.e., the sum over $S$. In fact it is clear that $\widehat{1_S}$ is real-valued, since $S = -S$, and hence we may drop the absolute value signs in~\eqref{eqn:summing-triples}.

The form of the proof can then be viewed as an analogue of the Hardy--Littlewood circle method in analytic number theory, adapted to the group $G^n$ rather than $\Z$. We borrow some nomenclature from the classical setting.
\begin{itemize}
  \item There are a small number of characters $\chi \in \widehat{G}^n$, namely the characters $\chi=(r_1,\dots,r_n)$ for which almost all of the $r_i$ are equal, that make a substantial contribution to the sum~\eqref{eqn:summing-triples}. We call the totality of these $\chi$ the \emph{major arcs}. We compute explicitly the contribution of these $\chi$ to \eqref{eqn:summing-triples}, up to small errors: this is an analogue of the singular series, which accounts for the main term in Theorem~\ref{supercooltheorembetterthanfermatslast}, including the bizarre constant $e^{-1/2}$. For all this see Section~\ref{sec:major-arcs}.
  \item We bound the contribution from all other characters using the triangle inequality; that is, we obtain an upper bound for
    \[
      \sum_{\text{all other}~\chi} |\widehat{1_S}(\chi)|^3
    \]
that is smaller than the main term. We call such $\chi$ collectively the \emph{minor arcs}.
\end{itemize}

A large part of this paper is therefore devoted to obtaining good bounds for Fourier coefficients $\widehat{1_S}(\chi)$, either pointwise or on average in a suitable sense. In fact we will need to combine several different arguments which are effective in different regimes.

\subsection{Preliminaries on \texorpdfstring{$\widehat{1_S}$}{1\_S hat}}

In order to describe how the characters $\widehat{G}^n$ are divided up into different pieces in which different approaches will be effective, we will need some preliminary remarks.

First, note that the function $\widehat{1_S}(r_1, \dots, r_n)$ is invariant under permutation of the $r_i$:
\[
  \widehat{1_S}(r_1, \dots, r_n) = \widehat{1_S}(r_{\sigma(1)}, \dots, r_{\sigma(n)})
\]
for any bijection $\sigma \colon \{1,\dots,n\} \to \{1,\dots,n\}$.  This is immediate from the definition~\eqref{eqn:ft-defn}. Hence it makes sense to specify a Fourier coefficient of interest in the form $\widehat{1_S}(r_1^{a_1}, \dots, r_k^{a_k})$, where $r_i \in \hat{G}$ are distinct, the $a_i$ are positive integers such that $\sum a_i = n$, and the notation $r^a$ means $r$ repeated $a$ times.

Second, observe that $\hat{1_S}(r_1, \dots, r_n) = 0$ unless $\sum r_i = 0$.  This follows from the fact that $S$ is invariant under global shifts $(x_i) \mapsto (x_i + t)$, so one can compute
\begin{align*}
	\hat{1_S}(r_1,\dots,r_n)
	&= \frac1{n^n} \sump_{x_1,\dots,x_n} e(-r_1x_1-\cdots-r_nx_n)\\
	&= \frac1{n^n} \sump_{x_1,\dots,x_n} e(-r_1(x_1+t)-\cdots-r_n(x_n+t))\\
  &= \hat{1_S}(r_1,\dots,r_n) e(-(r_1+\cdots+r_n) t).
\end{align*}

Dually, note that $\widehat{1_S}$ is invariant under global shifts. This follows from the fact that $\sum x_i = 0$ for any $(x_i) \in S$.  Hence,
\begin{align*}
	\hat{1_S}(r_1,\dots,r_n)
	&= \frac1{n^n} \sump_{x_1,\dots,x_n} e(-r_1x_1-\cdots-r_nx_n)\\
  &= \frac1{n^n} \sump_{x_1,\dots,x_n} e(-(r_1+t)x_1-\cdots-(r_n+t)x_n)\\
	&= \hat{1_S}(r_1+t,\dots,r_n+t).
\end{align*}

Finally, note the trivial bound
  \[
    |\widehat{1_S}(\chi)| \le \widehat{1_S}(0) = \frac{n!}{n^n}.
  \]

\subsection{Entropy ranges for minor arcs}

We now explain a straightforward but still fairly powerful bound on $|\widehat{1_S}(\chi)|$ that follows directly from these elementary considerations.

\begin{proposition}[``Entropy bound'']  \label{prop:entropy}
  We have
  \[
    |\widehat{1_S}(r_1^{a_1}, \dots, r_k^{a_k})| \le \binom{n}{a_1, \dots, a_k}^{-1/2} \left(\frac{n!}{n^n}\right)^{1/2}.
  \]
\end{proposition}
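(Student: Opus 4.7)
The proposition is a direct consequence of Parseval's identity combined with the permutation symmetry that was just recorded, together with a trivial pigeonhole over the coordinate-permutation orbit of the character $(r_1^{a_1},\dots,r_k^{a_k})$. No new ideas beyond the preliminaries are needed.

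\textbf{Step 1 (Parseval).} With the conventions from the notation paragraph -- uniform probability measure on the physical side $G^n$ and counting measure on the frequency side $\widehat{G}^n$ -- Parseval's identity reads
\[
\sum_{\chi \in \widehat{G}^n} |\widehat{1_S}(\chi)|^2 \;=\; \E_{x \in G^n} 1_S(x)^2 \;=\; \E_{x \in G^n} 1_S(x) \;=\; \frac{|S|}{n^n} \;=\; \frac{n!}{n^n}.
\]
Every term on the left is nonnegative, so any partial sum is also at most $n!/n^n$.

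\textbf{Step 2 (Symmetry orbit).} By the invariance under coordinate permutations of $\widehat{1_S}$ noted just above the proposition, any rearrangement of the tuple $(r_1^{a_1},\dots,r_k^{a_k})$ yields the same Fourier coefficient. Because the $r_i$ are distinct in $\widehat{G}$, these rearrangements are all distinct characters of $G^n$, and there are exactly $\binom{n}{a_1,\dots,a_k}$ of them. Restricting the Parseval sum to this single orbit therefore gives
\[
\binom{n}{a_1,\dots,a_k} |\widehat{1_S}(r_1^{a_1},\dots,r_k^{a_k})|^2 \;\le\; \frac{n!}{n^n},
\]
and rearranging yields the stated estimate.

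\textbf{Main obstacle.} There really isn't one -- this is essentially a bookkeeping exercise, and the only thing to check carefully is that the multinomial coefficient genuinely counts the number of distinct characters in the orbit, which is where the hypothesis that $r_1,\dots,r_k$ be distinct is used. I would write the argument up in just a few lines and use it as a baseline bound, against which the more delicate minor-arc estimates later in the paper can be compared. The interesting content of the paper will lie not in this inequality but in identifying ranges of $(a_1,\dots,a_k)$ where this ``entropy bound'' is strong enough, and finding sharper tools for the regimes where it is not (for instance, when one $a_i$ is very close to $n$).
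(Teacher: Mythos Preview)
Your proof is correct and follows essentially the same approach as the paper: Parseval gives $\sum_\chi |\widehat{1_S}(\chi)|^2 = n!/n^n$, and restricting to the permutation orbit of $(r_1^{a_1},\dots,r_k^{a_k})$, which has size $\binom{n}{a_1,\dots,a_k}$ and on which $\widehat{1_S}$ is constant, yields the bound immediately. Your additional remark that the distinctness of the $r_i$ is what guarantees the orbit size equals the multinomial coefficient is a welcome clarification.
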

\begin{proof}
  Let $\cO \subset \widehat{G}^n$ denote the set of characters obtained by permuting the elements of $(r_1^{a_1}, \dots, r_k^{a_k})$.  Hence, $|\cO| = \binom{n}{a_1, \dots, a_k}$ and by permutation invariance, $\widehat{1_S}$ takes a constant value on $\cO$. Thus by Parseval,
  \begin{align*}
    \frac{n!}{n^n}
    &= \sum_{\chi \in \widehat{G}^n} |\widehat{1_S}(\chi)|^2 \\
    &\ge \sum_{\chi \in \cO} |\widehat{1_S}(\chi)|^2 \\
    &= |\cO| |\widehat{1_S}(r_1^{a_1}, \dots, r_k^{a_k})|^2,
  \end{align*}
  and the result follows.
\end{proof}

The term ``entropy bound'' refers to the fact that the quantity $H(\chi) = \frac1{n} \log \binom{n}{a_1, \dots, a_k}$ is roughly the entropy $\sum_{i=1}^k (a_i/n) \log (n/a_i) \in [0, \log n]$ of a random variable taking the value $r_i$ with probability $a_i/n$.  In a slight abuse of nomenclature, we refer to this first quantity $H(\chi)$ as the \emph{entropy of $\chi$}.  In this language, the bound of Proposition \ref{prop:entropy} is precisely $\exp(- H n / 2) (n!/n^n)^{1/2}$ or roughly $\exp(- Hn/2-n/2)$. 

This is already sufficient to control the contribution to~\eqref{eqn:summing-triples} for most characters $\chi$.

\begin{corollary}
  \label{cor:crappy-minor-arcs}
  We have
  \[
    \sum_{\chi \colon H(\chi) \ge R} |\hat{1_S}(\chi)|^3 \le \exp((3 - R)n/2) \(\frac{n!}{n^n}\)^3.
  \]
\end{corollary}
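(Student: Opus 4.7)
The plan is to combine the pointwise entropy bound of Proposition~\ref{prop:entropy} with Parseval's identity in the standard way one bounds an $\ell^3$-sum by ``$\ell^\infty \cdot \ell^2$''. Concretely, I would start from the estimate
\[
  \sum_{\chi : H(\chi) \geq R} |\hat{1_S}(\chi)|^3 \leq \(\sup_{\chi : H(\chi) \geq R} |\hat{1_S}(\chi)|\) \sum_{\chi \in \widehat{G}^n} |\hat{1_S}(\chi)|^2.
\]
By Parseval the second factor is $\|1_S\|_2^2 = |S|/n^n = n!/n^n$. For the first factor, Proposition~\ref{prop:entropy} gives, for any $\chi = (r_1^{a_1},\dots,r_k^{a_k})$,
\[
  |\hat{1_S}(\chi)| \leq \binom{n}{a_1,\dots,a_k}^{-1/2} \(\frac{n!}{n^n}\)^{1/2} = \exp(-H(\chi)n/2) \(\frac{n!}{n^n}\)^{1/2},
\]
which under the hypothesis $H(\chi) \geq R$ is at most $\exp(-Rn/2)(n!/n^n)^{1/2}$. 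Combining these gives the clean intermediate bound $\exp(-Rn/2)(n!/n^n)^{3/2}$.

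To match the form stated in the corollary, I would then apply the Stirling lower bound $n! \geq (n/e)^n$, which yields $(n!/n^n)^{-3/2} \leq e^{3n/2}$. Multiplying through by $(n!/n^n)^3$ rewrites the intermediate bound as $\exp((3-R)n/2)(n!/n^n)^3$, which is exactly the claimed inequality.

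There is really no obstacle here; the argument is a two-line combination of Proposition~\ref{prop:entropy}, Parseval, and Stirling, and the only thing to be careful about is the bookkeeping when converting $(n!/n^n)^{3/2}$ to $(n!/n^n)^3$ via Stirling so that the constant in the exponent comes out as $(3-R)/2$ rather than $-R/2$.
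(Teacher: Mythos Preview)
Your proposal is correct and follows essentially the same argument as the paper: the paper also bounds the $\ell^3$-sum by $\ell^2 \cdot \ell^\infty$, uses Parseval for the $\ell^2$ factor, Proposition~\ref{prop:entropy} for the $\ell^\infty$ factor, and then the Stirling estimate $n! > (n/e)^n$ to convert $(n!/n^n)^{3/2}$ into $\exp(3n/2)(n!/n^n)^3$.
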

\begin{proof}
  By Parseval and Proposition~\ref{prop:entropy},
  \begin{align*}
    \sum_{\chi \colon H(\chi) \ge R} |\widehat{1_S}(\chi)|^3
    &\le \left(\sum_{\chi \colon H(\chi) \ge R} |\widehat{1_S}(\chi)|^2 \right) \sup_{\chi \colon H(\chi) \ge R} |\widehat{1_S}(\chi)| \\
    &\le \left(\sum_{\chi \in \widehat{G}^n} |\widehat{1_S}(\chi)|^2 \right) \sup_{\chi \colon H(\chi) \ge R} \exp(-H(\chi)n/2) \left(\frac{n!}{n^n}\right)^{1/2}\\
    &\le \exp(-R n/2) \left(\frac{n!}{n^n}\right)^{3/2}\\
    &\le \exp((3-R)n/2) \left(\frac{n!}{n^n}\right)^3
  \end{align*}
  as required, where we have used the estimate $n! > (n/e)^n$.
\end{proof}

This is smaller than the main term for any fixed $R > 3$, so from now on such high-entropy characters need not concern us. We refer to such characters as the \emph{high-entropy minor arcs}.  Since a typical character $\chi$ has entropy comparable to $\log n$, the high-entropy case comprises almost all characters in some sense, so this is a good first step.

On the opposite extreme we have characters $\chi$ with entropy $o(1)$, such as $\chi = \left(r^k, -r^k, 0^{n-2k}\right)$ for $k=o(n)$. The characters with entropy $O\(\frac{\log n}{n}\)$ are the major arcs, but between $O\(\frac{\log n}{n}\)$ and $o(1)$ we have the \emph{low-entropy minor arcs}. Such characters are necessarily of the form $(\dots,r^{n-o(n)})$, i.e., they take a single value almost all time. By the remarks above we may shift without loss of generality to assume that this value $r$ is zero. Thus the low-entropy minor arcs are closely related to the set of \emph{sparse} characters, characters $\chi$ comprised almost entirely of zeros.

That leaves the characters $\chi$ with $o(1)\leq H(\chi)\leq 3$, which form the \emph{medium-entropy minor arcs}.  A good model case are characters such as $\chi = (r^{n/3}, - r^{n/3}, 0^{n/3})$, which are particularly troublesome.

There are two fundamental areas of inefficiency in the arguments of Proposition \ref{prop:entropy} and Corollary \ref{cor:crappy-minor-arcs} that prevent them from giving good bounds for low- or medium-entropy minor arcs. First, the bound on $|\hat{1_S}|$ given by Proposition \ref{prop:entropy} is less effective for smaller $H$, and in particular worse than trivial when $H < 1$. Clearly there is no hope for this programme unless we can find a bound on $|\hat{1_S}|$ that is nontrivial throughout the range $0 < H < 1$, and moreover obtains a substantial exponential saving for most of that range.

Second, in the proof of Corollary~\ref{cor:crappy-minor-arcs} we made use of the convenient bound
\[
	\sum_{\chi \in X} |\hat{1_S}(\chi)|^3 \le \left(\sum_{\chi \in \hat{G}} |\widehat{1_S}(\chi)|^2 \right) \sup_{\chi \in X} |\hat{1_S}(\chi)|
\]
for some appropriate set $X\subset\hat{G}^n$. It is fairly clear that we cannot afford this luxury for small entropies: since the main term has order $(n!/n^n)^3$, we need $|\hat{1_S}(\chi)| = o((n!/n^n)^2)$ for all $\chi \in X$ for this to be effective. This is a saving of about $\exp(-n)$ over the trivial bound of $n!/n^n$, and it is simply not reasonable to expect such a bound to hold if, say, $H(\chi) = 1/1000$. One way around this is to pigeonhole the characters $\chi$ into various sets $X_i$, and apply such an $L^2 \cdot L^\infty$ bound on each set. To make this work, we would need a bound on
\[
	\sum_{\chi \in X_i} |\widehat{1_S}(\chi)|^2
\]
which makes a significant saving over the crude estimate
\[
	\sum_{\chi \in X_i} |\widehat{1_S}(\chi)|^2 \le \sum_{\chi \in \widehat{G}^n} |\widehat{1_S}(\chi)|^2
\]
employed in Corollary~\ref{cor:crappy-minor-arcs}. Alternatively, one could try to bound the $L^3$ sum
\[
	\sum_{\chi \in X_i} |\widehat{1_S}(\chi)|^3
\]
directly, using an $L^\infty$ bound and an estimate for the size of $X_i$.

We address both of these inefficiencies in order to reach our final bound. Specifically, we will do each of the following.
\begin{itemize}
  \item Improving on Proposition~\ref{prop:entropy}, we obtain a good general-purpose bound for $|\hat{1_S}(\chi)|$ that is nontrivial for values of $H(\chi)$ approaching zero. Roughly speaking, where Proposition~\ref{prop:entropy} gives the bound $e^{-Hn/2-n/2}$ we will prove the bound $e^{-Hn/2 - n}$. This appears in Section~\ref{sec:srh}.
  \item We use this bound to estimate the contribution to~\eqref{eqn:summing-triples} from $\chi$ with $10^{-10}<H(\chi)<10$, say, by using a dyadic decomposition and a simple estimate for the number of characters of a given entropy. Thus we dispatch the medium-entropy minor arcs. This is covered in Section~\ref{sec:end}.
  \item Separately, we obtain a good estimate for
\[
	\sum_{m\text{-sparse}~\chi} |\widehat{1_S}(\chi)|^2,
\]
i.e., an improvement to Parseval when summing only over those characters $\chi$ having exactly $m$ nonzero terms. Here we might imagine $m \le n/1000$. We call this a \emph{sparseval} bound, and prove this in Section~\ref{sec:sparseval}.
  \item Finally, we obtain a slightly different $L^\infty$ bound specialized to the case of sparse characters $\chi$. This appears in Section~\ref{sec:linfty}. The total contribution from the low-entropy minor arcs is controlled by combining this with the $L^2$ bound above.
\end{itemize}

In summary, we split the universe of all $\chi$ into several slightly overlapping regions -- major arcs, low-entropy minor arcs, medium-entropy minor arcs, and high-entropy minor arcs -- and apply a cocktail of different bounds and explicit computations adapted to each region.

\subsection{Interpretations of minor arc bounds}

As we have said, a significant part of our effort will be spent proving nontrivial bounds on Fourier coefficients $|\hat{1_S}(\chi)|$ for $\chi$ in the minor arcs. Although such bounds are at first glance fairly esoteric, in fact they have natural interpretations. For instance, they are intimately related to questions of the following flavour.

\begin{question}
  Suppose the elements of $G = \Z/n\Z$ are divided into $k$ sets $A_1, \dots, A_k$, each of a pre-determined size $a_i \approx n/k$, at random.  Consider the random variables
  \[
    T_i = \sum_{r \in A_i} r \in G
  \]
  for $i = 1,\dots,k$. The vector $T = (T_1,\dots,T_k)$ is a random variable taking values in the subgroup $H = \{ (x_i) \in G^k \colon \sum x_i = 0 \}$ of $G^k$. How close is $T$ to being equidistributed on $H$, in a quantitative sense? In other words how close is the law of $T$ to the uniform distribution on $H$?
\end{question}

Because of the connection of this question to the size of the Fourier coefficients $\hat{1_S}(r_1^{a_1}, \dots, r_k^{a_k})$, our bounds give a strong answer to this question in many regimes. It is not inconceivable that these kind of bounds may find applications elsewhere.

%
%
%
%

\def\PP{\mathcal{P}}

\section{Major arcs}\label{sec:major-arcs}

In this section we compute $\hat{1_S}(r_1,\dots,r_m,0,\dots,0)$ explicitly whenever $m$ is bounded. To this end observe that
\begin{equation} \label{eqn:sparse-identity}
	\hat{1_S}(r_1,\dots,r_m,0,\dots,0) = \frac{(n-m)!}{n^n} \sump_{x_1,\dots,x_m} e(-r_1x_1-\cdots-r_mx_m).
\end{equation}
The sum over distinct $x_1,\dots,x_m$ can be related to a sum over partitions of $\{1,\dots,m\}$ using a type of M{\"o}bius inversion: for any function $F(x_1,\dots,x_m)$ we have
\[
	\sump_{x_1,\dots,x_m} F(x_1,\dots,x_m) = \sum_{\PP} \mu(\PP) \sum_{\substack{x_1,\dots,x_m\\ x_i = x_j~\text{whenever}~i\stackrel{\PP}{\sim} j}} F(x_1,\dots,x_m),
\]
where the outer sum runs over partitions $\PP$ of $\{1,\dots,m\}$, and
\[
	\mu(\PP) = (-1)^{m-|\PP|} \prod_{P\in\PP} (|P|-1)!.
\]
See, e.g.,~\cite{handbookenumerative} for more information. To apply this to $\hat{1_S}$, say that a partition $\PP$ of $\{1,\dots,m\}$ \emph{kills} $(r_1,\dots,r_m)$ if $\sum_{i\in P} r_i = 0$ for each $P\in\PP$, and observe that
\[
	\sum_{\substack{x_1,\dots,x_m\\ x_i = x_j~\text{whenever}~i\stackrel{\PP}{\sim} j}} e(-r_1x_1-\cdots-r_mx_m) = 
	\begin{cases}
		n^{|\PP|} & \text{if}~\PP~\text{kills}~(r_1,\dots,r_m),\\
		0 & \text{otherwise},
	\end{cases}
\]
so
\begin{equation}\label{partitionformula}
	\hat{1_S}(r_1,\dots,r_m,0,\dots,0) = \frac{(n-m)!}{n^n} \sum_{\PP~\text{killing}~(r_1,\dots,r_m)} \mu(\PP) n^{|\PP|}.
\end{equation}

The following two observations are immediate from this formula:
\begin{itemize}
	\item Suppose every killing partition of $(r_1,\dots,r_m)$ has at most $k$ parts. Then
\begin{equation}
	|\hat{1_S}(r_1,\dots,r_m,0,\dots,0)| \leq O_m\( \frac1{n^{m-k}} \frac{n!}{n^n}\).\label{nmkbound}
\end{equation}
	\item Suppose that $m$ is even, that $r_1,\dots, r_m\in\hat{G}$ are nonzero, and that $(r_1,\dots,r_m)$ is killed by a unique partition with $m/2$ parts. In other words suppose that $r_1,\dots,r_m\in\hat{G}$ are distinct and nonzero, and that, up to a permutation, $r_{2j}=-r_{2j-1}$ for each $j=1,\dots,m/2$. Then
\begin{equation}
	\hat{1_S}(r_1,\dots,r_m,0,\dots,0) = \frac{(-1)^{m/2}}{n^{m/2}}\frac{n!}{n^n} + O_m\(\frac1{n^{m/2+1}} \frac{n!}{n^n}\).\label{pairsterms}
\end{equation}
\end{itemize}

\begin{proposition}\label{majorarcs}
If $m$ is even then
\[
	\sum_{m\textup{-sparse}~\chi} \hat{1_S}(\chi)^3 = \frac{(-1)^{m/2}}{2^{m/2} (m/2)!} \frac{n!^3}{n^{3n}} + O_m\(\frac1n \frac{n!^3}{n^{3n}}\),
\]
while if $m$ is odd then
\[
	\sum_{m\textup{-sparse}~\chi} \hat{1_S}(\chi)^3 = O_m\(\frac1n \frac{n!^3}{n^{3n}}\).
\]
\end{proposition}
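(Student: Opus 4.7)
The approach is to substitute the partition-sum identity \eqref{partitionformula} into the sum, cube, and analyze the resulting triple sum over killing partitions. By permutation invariance of $\hat{1_S}$, I first rewrite
\[
q_m := \sum_{m\text{-sparse}~\chi} \hat{1_S}(\chi)^3 = \binom{n}{m} \sum_{(r_1,\ldots,r_m) \in (\hat G \setminus \{0\})^m} \hat{1_S}(r_1,\ldots,r_m,0^{n-m})^3,
\]
the binomial factor accounting for the choice of which $m$ positions are nonzero. Substituting \eqref{partitionformula}, expanding the cube, and swapping order of summation gives
\[
q_m = \frac{n!\,(n-m)!^2}{m!\,n^{3n}} \sum_{\PP_1,\PP_2,\PP_3} \mu(\PP_1)\mu(\PP_2)\mu(\PP_3)\, n^{|\PP_1|+|\PP_2|+|\PP_3|}\, N(\PP_1,\PP_2,\PP_3),
\]
where I have used $\binom{n}{m}(n-m)!^3 = n!(n-m)!^2/m!$ and introduced $N(\PP_1,\PP_2,\PP_3) := |\{r \in (\hat G \setminus \{0\})^m : \text{each } \PP_a \text{ kills } r\}|$.

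The central observation is that since each $r_i$ is nonzero, any killing partition of $r$ has no singleton blocks, so $|\PP_a| \le m/2$ with equality iff $\PP_a$ is a pair partition; in particular this fails when $m$ is odd. Writing $d = d(\PP_1,\PP_2,\PP_3)$ for the dimension of the common solution subgroup of $G^m$, we have $N \le n^d$ and $d \le m - \max_a|\PP_a|$. I would then prove the key inequality
\[
|\PP_1|+|\PP_2|+|\PP_3| + d \le 2m,
\]
with equality iff $m$ is even and $\PP_1 = \PP_2 = \PP_3 = \PP$ is a pair partition. Granting this, the main term comes from these diagonal triples of pair partitions: for each such $\PP$, $N(\PP,\PP,\PP) = (n-1)^{m/2}$ (one free nonzero choice of $r_i$ per pair), $\mu(\PP)^3 = (-1)^{m/2}$, and there are $(m-1)!!$ pair partitions, contributing a total of $(-1)^{m/2}(m-1)!!\,n^{2m}(1 + O_m(1/n))$ to the inner sum. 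The remaining $O_m(1)$ triples together contribute at most $O_m(n^{2m-1})$. Combining with the prefactor and using $(n-m)!^2 n^{2m} = n!^2(1 + O_m(1/n))$ together with $(m-1)!!/m! = 1/(2^{m/2}(m/2)!)$ produces the claimed main term plus error $O_m(\frac{1}{n} \cdot \frac{n!^3}{n^{3n}})$. When $m$ is odd, pair partitions do not exist, so $|\PP_a| \le (m-1)/2$ and the key inequality gives $\sum|\PP_a| + d \le 2m - 1$ throughout, producing only the error term.

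The main obstacle is verifying the key inequality, in particular ruling out equality when all three $\PP_a$ are pair partitions that are not all equal; here one needs the strict bound $d \le m/2 - 1$. This reduces to the linear-algebra claim that for distinct pair partitions $\PP \ne \PP'$ of $[m]$, their combined block-sum-zero constraints on $G^m$ have rank strictly greater than $m/2$. To see this, pick any $\{i,j\} \in \PP' \setminus \PP$ and let $i''$ be the element paired with $i$ in $\PP$ (so $i'' \ne j$); any linear combination $\sum_{\{i',j'\} \in \PP} c_{\{i',j'\}}(r_{i'} + r_{j'})$ that equals $r_i + r_j$ must assign coefficient $1$ to $r_i$, which forces $c_{\{i,i''\}} = 1$ and hence also assigns coefficient $1$ to $r_{i''}$, a contradiction. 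The remaining subcases (some $|\PP_a| \le m/2 - 1$, or $m$ odd) are immediate from the bound $d \le m - \max_a|\PP_a|$.
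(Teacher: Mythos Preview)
Your proof is correct and takes a genuinely different route from the paper's.

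The paper does not expand the cube. Instead it groups the tuples $(r_1,\dots,r_m)$ according to a \emph{single} maximal killing partition $\PP$, applies the pointwise bound~\eqref{nmkbound} to each factor of $\hat{1_S}^3$, and counts the tuples killed by $\PP$. The only tuples surviving this are those with a unique pair partition killing them, and for these the paper invokes the asymptotic~\eqref{pairsterms} directly. So the paper's argument is essentially ``pointwise bound cubed, then count,'' whereas yours is ``cube the exact formula, swap sums, then prove a combinatorial inequality on triples of partitions.''

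Your approach is more algebraic and arguably more systematic: once the key inequality $|\PP_1|+|\PP_2|+|\PP_3|+d\le 2m$ (with its equality characterisation) is established, the main term and error drop out mechanically. The paper's approach is shorter because it leans on~\eqref{nmkbound} and~\eqref{pairsterms}, which were already set up, and it never needs to compare two distinct killing partitions. One minor point of care in your write-up: speaking of the ``dimension'' $d$ of a subgroup of $(\Z/n\Z)^m$ is slightly loose when $n$ is composite, but the bounds you actually use --- $|H|\le n^{m-\max_a|\PP_a|}$ from the block structure of a single $\PP_a$, and $|H_\PP\cap H_{\PP'}|\le n^{m/2-1}$ for distinct pair partitions via your coefficient argument --- hold verbatim over $\Z/n\Z$, so the argument is sound.
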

\begin{proof}
First of all note by permutation-invariance that
\begin{equation}\label{nchoosemfactor}
	\sum_{m\text{-sparse}~\chi} \hat{1_S}(\chi)^3 = \binom{n}{m} \sum_{r_1,\dots,r_m\neq 0} \hat{1_S}(r_1,\dots,r_m,0,\dots,0)^3.
\end{equation}
For each $(r_1,\dots,r_m)$ choose a maximal-size partition $\PP$ which kills $(r_1,\dots,r_m)$, and split the sum up according to $\PP$. Suppose $\PP$ has $k$ parts. Since $\PP$ cannot have singletons we have $k\leq m/2$, and by~\eqref{nmkbound} we have
\[
	|\hat{1_S}(r_1,\dots,r_m,0,\dots,0)| \leq O_m\( \frac1{n^{m-k}} \frac{n!}{n^n}\).
\]
Since the number of $(r_1,\dots,r_m)$ killed by $\PP$ is bounded by $n^{m-k}$, the contribution to~\eqref{nchoosemfactor} from these $(r_1,\dots,r_m)$ is bounded by
\[
	\binom{n}{m} n^{m-k} \(\frac{O_m(1)}{n^{m-k}} \frac{n!}{n^n}\)^3 = O_m\(\frac1{n^{m-2k}} \frac{n!^3}{n^{3n}}\),
\]
which is satisfactory unless $k=m/2$. Moreover the number of $(r_1,\dots,r_m)$ killed by at least two partitions $\PP$ with $m/2$ parts is bounded by $n^{m/2-1}$, so the contribution from these $(r_1,\dots,r_m)$ is bounded by
\[
	\binom{n}{m} n^{m/2-1} O_m\(\frac1{n^{m/2}} \frac{n!}{n^n}\)^3 = O_m\(\frac1{n} \frac{n!^3}{n^{3n}}\),
\]
which is again satisfactory. Thus we may restrict our attention to those $(r_1,\dots,r_m)$ which are killed by a unique partition $\PP$ with $m/2$ parts, and for these~\eqref{pairsterms} gives
\[
	\hat{1_S}(r_1,\dots,r_m,0,\dots,0) = \frac{(-1)^{m/2}}{n^{m/2}}\frac{n!}{n^n} + O_m\(\frac1{n^{m/2+1}} \frac{n!}{n^n}\).
\]
For a fixed such $\PP$ the number of such $(r_1,\dots,r_m)$ is $n^{m/2} + O_m(n^{m/2-1})$, and the number of such $\PP$ is
\[
	(m-1)(m-3)\cdots 1 = \frac{m!}{2^{m/2}(m/2)!},
\]
so the total contribution from these $(r_1,\dots,r_m)$ is
\begin{align*}
	\binom{n}{m} \frac{m!}{2^{m/2}(m/2)!} (n^{m/2} + O_m(n^{m/2-1})) & \(\frac{(-1)^{m/2}}{n^{m/2}}\frac{n!}{n^n} + O_m\(\frac1{n^{m/2+1}} \frac{n!}{n^n}\)\)^3\\
	&= \frac{(-1)^{m/2}}{2^{m/2}(m/2)!} \frac{n!^3}{n^{3n}} + O_m\(\frac1{n} \frac{n!^3}{n^{3n}}\).
\end{align*}
This proves the proposition.
\end{proof}

\section{Square-root cancellation for general Fourier coefficients}\label{sec:srh}

The aim of this section is to prove the following refinement of Proposition~\ref{prop:entropy}.

\begin{theorem}\label{SRH}
Suppose $\chi=(r_1^{a_1},\dots,r_k^{a_k})$, where $\sum_{i=1}^k a_i = n$. Then
\[
	|\hat{1_S}(\chi)| \leq \binom{n+k-1}{k-1}^{1/2} \binom{n}{a_1,\dots,a_k}^{-1/2} \frac{n!}{n^n}.
\]
\end{theorem}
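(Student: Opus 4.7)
The plan is to recast $\widehat{1_S}(\chi)$ as the extraction of a single coefficient from an explicit polynomial in $k$ complex variables, and then to bound that coefficient via its $L^2$ norm on the unit sphere of $\C^k$. First, letting $A_l \subset \{1,\dots,n\}$ denote the set of positions at which $\chi$ equals $r_l$, so $|A_l| = a_l$, the sum over bijections $\pi\colon\{1,\dots,n\}\to G$ in \eqref{eqn:ft-defn} decomposes as a sum over ordered partitions $(B_1,\dots,B_k)$ of $G$ with $|B_l| = a_l$ (namely $B_l = \pi(A_l)$), weighted by $\prod_l a_l!$ for the bijections $A_l\to B_l$. Setting $T_l = \sum_{b\in B_l} b$, this yields
\[
	\widehat{1_S}(\chi) = \frac{\prod_l a_l!}{n^n} \sum_{(B_l)} \prod_l e(-r_l T_l),
\]
and a short expansion identifies the inner sum as the coefficient $c_a$ of $z_1^{a_1}\cdots z_k^{a_k}$ in the polynomial
\[
	P(z_1,\dots,z_k) = \prod_{x\in G} \Bigl(\sum_{l=1}^k z_l\, e(-r_l x)\Bigr).
\]
Since $\prod_l a_l! = n!/\binom{n}{a_1,\dots,a_k}$, the theorem reduces to the estimate $|c_a| \leq \binom{n+k-1}{k-1}^{1/2}\binom{n}{a_1,\dots,a_k}^{1/2}$.

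Next I would invoke the classical formula for the $L^2$ norm of a homogeneous polynomial of degree $n$ in $k$ complex variables on the unit sphere $S^{2k-1}\subset\C^k$. A Dirichlet integral gives $\int_{S^{2k-1}} |z^\alpha|^2\,d\sigma = (k-1)!\,\alpha!/(n+k-1)!$ for each multi-index $\alpha$ with $|\alpha|=n$, where $d\sigma$ is the uniform probability measure, and orthogonality of monomials then yields
\[
	\sum_{|\alpha|=n} \frac{|c_\alpha|^2}{\binom{n}{\alpha_1,\dots,\alpha_k}} = \binom{n+k-1}{k-1} \int_{S^{2k-1}} |P(z)|^2\,d\sigma.
\]
Isolating the $\alpha=a$ term gives $|c_a|^2 \leq \binom{n}{a_1,\dots,a_k}\binom{n+k-1}{k-1}\int_{S^{2k-1}}|P|^2\,d\sigma$, so matters reduce to proving $\int_{S^{2k-1}} |P|^2\,d\sigma \leq 1$.

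This last bound is the crux and is where the arithmetic of $G$ finally enters. For $z$ on the unit sphere I would compute
\[
	\sum_{x\in G} \Bigl|\sum_l z_l\, e(-r_l x)\Bigr|^2 = \sum_{l,l'} z_l\bar z_{l'} \sum_{x\in G} e(-(r_l-r_{l'})x) = n\sum_l |z_l|^2 = n,
\]
using distinctness of the $r_l$ together with character orthogonality on $G$ to kill the off-diagonal terms. Since $|P(z)|^2$ is then a product of $n$ nonnegative reals whose sum is $n$, AM--GM forces $|P(z)|^2 \leq 1$ pointwise on $S^{2k-1}$, and integrating gives the required bound. I expect the main conceptual obstacle to be recognizing that the correct way to measure the coefficients is through the weighted sum $\sum_\alpha |c_\alpha|^2/\binom{n}{\alpha}$, whose sphere interpretation produces both binomial factors in the theorem with the correct exponents; once that tool is in hand, the pointwise bound $|P|\leq 1$ on the sphere is essentially immediate from character orthogonality and AM--GM.
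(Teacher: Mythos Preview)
Your proof is correct and follows essentially the same approach as the paper. The only difference is cosmetic: the paper extracts the coefficient $c_a$ via the Gaussian inner product on $\C^k$ (so that $\int_{\C^k} z^\alpha \overline{z^\beta}\,d\gamma = \alpha!\,\mathbf{1}_{\alpha=\beta}$), applies Cauchy--Schwarz, and then uses AM--GM together with character orthogonality inside the Gaussian integral, whereas you use the equivalent inner product on the unit sphere $S^{2k-1}$; the two are related by the standard polar decomposition of the Gaussian, and the key steps (the polynomial identity for $P$, monomial orthogonality, and the AM--GM/character-orthogonality bound $|P|\le 1$) are identical.
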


\begin{remark} \ 
  \begin{itemize}
    \item The terms depending on $k$ are easily seen to be a small error in the regimes we care about.  Indeed, if $H(\chi) = O(1)$ then $\chi$ can take at most $O(n/\log n) = o(n)$ distinct values, and the additional term is then $\exp(o(n))$, and thus small compared to the exponential entropy saving.

    \item The saving over Proposition \ref{prop:entropy} is an apparently modest factor of around $e^{-n/2 + o(n)}$. This may be uninspiring if $H$ is large, but if $H$ is small it is decisive.

    \item In general this bound is not sharp, often by a substantial amount. However, it is sometimes attained asymptotically for very special, arithmetically structured classes of characters. For instance, if we temporarily allow $n$ to be even, then one can compute directly that
      \[
        \log \left[ |\widehat{1_S}((1/2)^a, 0^{n-a})| / (n!/n^n) \right] \sim -\frac1{2} \log \binom{n}{a}
      \]
      for large $n$.  Similarly, numerical evidence strongly suggests that
      \[
        \log \left[ |\widehat{1_S}((1/3)^a, (-1/3)^a, 0^{n-2a})| / (n!/n^n) \right] \sim -\frac1{2} \log \binom{n}{a,a,n-2a}
      \]
      if $n$ is divisible by $3$; for example, when $n=3003$,
      \[
      \log \left[ |\hat{1_S}((1/3)^{1001},(-1/3)^{1001},0^{1001})| / (n!/n^n) \right] = -1649.01782245...,
      \]
      while
      \[
      - \frac12 \log \binom{3003}{1001,1001,1001} = -1645.46757758....
      \]
      But, for example, the evidence also suggests that
      \[
        \log \left[ |\widehat{1_S}((1/3)^a, 0^{n-a})| /(n!/n^n) \right] \sim -\frac{2}{3} \log \binom{n}{a}.
      \]
  \end{itemize}
\end{remark}

From the above remarks, we see that one might hope to improve on Theorem \ref{SRH}, but only by ruling out the particular characters discussed above and others similar to them.  For instance, a strengthening is almost certainly true under the assumption that $n$ is prime. Thus one might think of Theorem~\ref{SRH} as the best general-purpose bound available.

We should also comment briefly on the term ``square-root cancellation''.  If $\chi = (r_1^{a_1}, \dots, r_k^{a_k})$ then
\[
  \widehat{1_S}(\chi) = \frac{n!}{n^n} \mathop{{} \bigE}_{\text{partitions } \mathcal{P}} \ e\left( -r_1 \left(\sum_{x \in P_1} x\right) - \dots - r_k\left(\sum_{x \in P_k} x\right) \right)
\]
where the expectation is over all ordered partitions $\mathcal{P} = (P_1, \dots, P_k)$ of $G$ into $k$ pieces with $|P_i| = a_i$. The number of such partitions is precisely $\binom{n}{a_1,\dots,a_k}$; assuming that the phases in the average behave randomly then we should expect square-root cancellation, and we heuristically recover the bound in Theorem~\ref{SRH} up to lower-order terms.

\begin{proof}[Proof of Theorem~\ref{SRH}]
Let $\gamma$ be the Gaussian measure on $\C^k$ defined with respect to Lebesgue measure $\lambda$ by
\[
  \frac{d\gamma}{d\lambda} = \frac1{\pi^k} \exp\(-\sum_{i=1}^k |z_i|^2\).
\]
We claim that
\begin{equation} \label{eqn:magic-srh}
	n^n \hat{1_S}(\chi)
	= \int_{\C^k} z_1^{a_1}\cdots z_k^{a_k} \prod_{x\in G}\( \sum_{i=1}^k e(-r_i x) \overline{z_i}\)\, d\gamma.
\end{equation}
One can check this by expanding the product and using the identity
\begin{equation}\label{aibirelation}
	\int_{\C^k} \prod_{i=1}^k z_i^{a_i} \overline{z_i}^{b_i}\,d\gamma = \begin{cases} \prod_{i=1}^k a_i! & \text{if}~a_i=b_i~\text{for each}~i,\\ 0 & \text{otherwise}.\end{cases}
\end{equation}

We can now proceed by bounding the integral in~\eqref{eqn:magic-srh} using various techniques.  Applying the Cauchy--Schwarz inequality we bound the right hand side by
\begin{equation}\label{CSbound}
	\(\int_{\C^k} |z_1|^{2a_1} \cdots |z_k|^{2a_k} \,d\gamma\)^{1/2} \(\int_{\C^k} \prod_{x\in G}\left|\sum_{i=1}^k e(-r_i x) \overline{z_i}\right|^2\,d\gamma\)^{1/2}.
\end{equation}
The first factor here is exactly $\(\prod_{i=1}^k a_i!\)^{1/2}$ by~\eqref{aibirelation}. If we now apply the AM--GM inequality to the second term and evaluate the resulting integral, we get
\begin{align*}
	\int_{\C^k} \prod_{x\in G}\left| \sum_{i=1}^k e(-r_i x)\overline{z_i}\right|^2\,d\gamma
	&\leq \int_{\C^k} \(\frac1n \sum_{x\in G} \left|\sum_{i=1}^k e(-r_i x) \overline{z_i}\right|^2\)^n\,d\gamma\\
	&= \int_{\C^k} \(\sum_{i=1}^k |z_i|^2\)^n\,d\gamma\\
	&= \sum_{s_1+\cdots+s_k = n} \binom{n}{s_1,\dots,s_k} \int_{\C^k} |z_1|^{2s_1}\cdots|z_k|^{2s_k}\,d\gamma\\
	&= n! \sum_{s_1+\cdots+s_k = n} 1\\
	&= n! \binom{n+k-1}{k-1}.
\end{align*}
Thus the theorem follows by dividing~\eqref{CSbound} by $n^n$.
\end{proof}

\begin{remark}
  Though the identity~\eqref{eqn:magic-srh} is easily verified directly, we briefly sketch here a possible route by which one might be motivated to write down such a formula in the first place.

  Let $\{X_x \colon x \in G\}$ be indeterminates, and observe that the left hand side of~\eqref{eqn:magic-srh} is precisely the coefficient of $\prod_{x \in G} X_x$ in the multivariate polynomial
  \[
    \prod_{i=1}^k \left( \sum_{x \in G} X_x e(-r_i x) \right)^{a_i},
  \]
  and so equivalently the value of the derivative
  \begin{equation} \label{eqn:srh-deriv}
    \left(\prod_{x \in G} \frac{\partial}{\partial X_x}\right) \prod_{i=1}^k \left( \sum_{x \in G} X_x e(-r_i x) \right)^{a_i}
  \end{equation}
  evaluated at zero (although in fact this evaluation is redundant as this expression is a constant function).

  In some generality, it is possible to equate a quantity of the form
  \[
    \frac{\partial^{b_1}}{\partial Y_1^{b_1}} \dots \frac{\partial^{b_k}}{\partial Y_k^{b_k}} f \big|_{Y_1 = \dots = Y_k = 0},
  \]
  where $f(Y_1, \dots, Y_k)$ is a suitable holomorphic function of $k$ complex variables, with the corresponding integral expression
  \[
    \int_{\C^k} \overline{Y_1^{b_1}} \dots \overline{Y_k^{b_k}}\, f(Y_1, \dots, Y_k) \, d \gamma,
  \]
  where again $\gamma$ denotes Gaussian measure. This can be verified by applying the case $k=1$ iteratively, which in turn follows by averaging the usual Cauchy integral formula over different radii. This formula can be considered a particular higher-dimensional variant of the Cauchy integral formula.
  
  The identity~\eqref{eqn:magic-srh} follows by applying this identity to~\eqref{eqn:srh-deriv} and making an orthogonal change of variables.

%
\end{remark}


\section{Sparseval}\label{sec:sparseval}

We recall from Section~\ref{idiotsguide} that one of our goals is to obtain good bounds on
\begin{equation} \label{eqn:hard-sparse}
  \sum_{m\text{-sparse}~\chi} |\widehat{1_S}(\chi)|^2,
\end{equation}
where the sum is over all characters $\chi=(r_1,\dots,r_n)$ with precisely $m$ nonzero entries.

First consider the related sum
\begin{equation} \label{eqn:easy-sparse}
  \sum_{\leq m\text{-sparse}~\chi} |\widehat{1_S}(\chi)|^2,
\end{equation}
i.e., the sum over characters with \emph{at most} $m$ non-zero entries.  This can be bounded by applying Parseval to the set $S_m$ of injections $\{1,\dots,m\}\to G$ as a subset of the group $G^m$. Indeed, letting $N(\chi)$ be the set of indices $i$ for which $r_i$ is nonzero, we have
\begin{align}
\sum_{\leq m\text{-sparse}~\chi} |\hat{1_S}(\chi)|^2
&\leq \sum_{|N|=m} \sum_{N(\chi) \subset N} |\hat{1_S}(\chi)|^2\nonumber\\
&= \binom{n}{m} \sum_{r_1,\dots,r_m} |\hat{1_S}(r_1,\dots,r_m,0,\dots,0)|^2\nonumber\\
&= \binom{n}{m} \sum_{r_1,\dots,r_m} \frac{(n-m)!^2}{n^{2n-2m}} |\hat{1_{S_m}}(r_1,\dots,r_m)|^2\label{easysparseval}\\
&= \binom{n}{m} \frac{(n-m)!^2}{n^{2n-2m}} \frac{n!}{n^m(n-m)!}\nonumber\\
&= \frac{n^m}{m!} \frac{n!^2}{n^{2n}}.\nonumber
\end{align}
Here we used (\ref{eqn:sparse-identity}).

This is our first nontrivial \emph{sparseval} bound. It turns out that overcounting as above is too inefficient in the context of the wider argument, when, say, $m/n$ is a small constant. The purpose of the rest of this section therefore is to improve the bound on~\eqref{eqn:hard-sparse} as much as possible.

\begin{theorem}\label{sparseval} If $m\leq n/2$ then
\[
	\sum_{m\textup{-sparse}~\chi} |\hat{1_S}(\chi)|^2 \leq O(m^{1/4}) e^{O(m^{3/2}/n^{1/2})} \binom{n}{m}^{1/2} \frac{n!^2}{n^{2n}}
\]
\end{theorem}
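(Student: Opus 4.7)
The plan is to derive an exact formula for the left-hand side by inclusion--exclusion, and then bound the resulting coefficient via Cauchy's integral formula on a well-chosen contour. For the first step, I would decompose an $m$-sparse character by its support $T\subseteq\{1,\dots,n\}$ (contributing a factor of $\binom{n}{m}$) and its nonzero values $(r_i)_{i\in T}$. Expanding the condition that all $r_i$ are nonzero by inclusion--exclusion on the subsets $U\subseteq T$ where $r_i=0$, each term reduces to $\sum_{\chi:\,\supp(\chi)\subseteq T\setminus U}|\hat{1_S}(\chi)|^2$, which is precisely the quantity evaluated in the ``easy'' sparseval calculation~\eqref{easysparseval} and equals $(n-j)!\,n!\,/\,n^{2n-j}$ for $j=|T\setminus U|$. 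Summing over $U$ and rearranging binomials using $\binom{n}{m}\binom{m}{j}(n-j)!=\frac{n!}{j!}\binom{n-j}{m-j}$ gives the clean identity
\[
	\sum_{m\text{-sparse }\chi}|\hat{1_S}(\chi)|^2 = \frac{n!^2}{n^{2n}}\,S(n,m), \qquad S(n,m) := \sum_{k=0}^m \frac{n^k}{k!}\binom{n-k}{m-k}(-1)^{m-k}.
\]

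Next I would recognise $S(n,m)$ as a coefficient extraction. Substituting $\binom{n-k}{m-k}=[z^{m-k}](1+z)^{n-k}$ and summing the series in $k$ identifies
\[
	S(n,m) = (-1)^m [z^m]\,G(z), \qquad G(z) := (1+z)^n e^{-nz/(1+z)}.
\]
Cauchy's integral formula then gives $|S(n,m)|\leq R^{-m}\sup_{|z|=R}|G(z)|$ for any $R\in(0,1)$. A direct calculation of $\frac{d}{d\theta}\log|G(Re^{i\theta})|$ shows the supremum is attained at $z=-R$, where $|G(-R)|=(1-R)^n e^{nR/(1-R)}$. Picking the near-saddle radius $R=\sqrt{m/n}\leq 1/\sqrt 2$ (valid since $m\le n/2$) and Taylor expanding $\log(1-R)+R/(1-R)=\sum_{k\geq 2}(k-1)R^k/k$, the higher-order tail sums to at most $O(m^{3/2}/\sqrt n)$ uniformly, so
\[
	\log|S(n,m)| \leq \tfrac{m}{2}\log(n/m) + \tfrac{m}{2} + O(m^{3/2}/\sqrt n).
\]

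The theorem follows by comparing this with Stirling's expansion $\log\binom{n}{m}^{1/2} = \tfrac{m}{2}\log(n/m) + \tfrac{m}{2} - \tfrac{1}{4}\log(2\pi m) - \tfrac{m^2}{4n} + O(m^3/n^2)$: the leading orders match, the Stirling correction $\tfrac{m^2}{4n}$ is absorbed into the $O(m^{3/2}/\sqrt n)$ error (using $m\leq n$), and the surviving $\tfrac14\log(2\pi m)$ produces the polynomial factor $O(m^{1/4})$. The main obstacle is precisely this final bookkeeping: verifying that $|G|$ on each circle really is maximised at $\theta=\pi$ (an elementary but fiddly one-variable check) and that all Taylor and Stirling error terms combine uniformly to $O(m^{3/2}/\sqrt n)$ throughout the range $m\leq n/2$. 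Neither step is deep, but both require some care.
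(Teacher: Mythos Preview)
Your proposal is correct and follows essentially the same route as the paper: derive the exact alternating sum $Q(m,n)=\sum_{k}(-1)^{m-k}\binom{n-k}{m-k}n^k/k!$ by inclusion--exclusion, express it as a coefficient of a generating function, and bound that coefficient via Cauchy's formula on a circle of radius $\asymp\sqrt{m/n}$ after locating the maximum of the integrand. The only cosmetic difference is that the paper packages the coefficient using $f(X)=n^{n+1}e^{X}/(n+X)^{n-m+1}$ (and checks both endpoints $z=\pm r$), whereas you use the $m$-independent function $G(z)=(1+z)^n e^{-nz/(1+z)}$ and identify the maximum at $z=-R$ directly; the resulting estimates and the final Stirling comparison are the same.
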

\begin{proof}
The starting point of our strategy is to apply inclusion--exclusion to obtain an expression for~\eqref{eqn:hard-sparse}.

For $m\leq n$ let
\[
	Q(m,n) = \frac{n^{2n}}{n!^2} \sum_{m\text{-sparse}~\chi} |\hat{1_S}(r)|^2,
\]
and observe as in \eqref{easysparseval} that
\begin{align*}
	\frac{n^m}{m!} 
	&= \frac{n^{2n}}{n!^2} \sum_{|N|=m} \sum_{N(\chi)\subset N} |\hat{1_S}(\chi)|^2\\
	&= \frac{n^{2n}}{n!^2} \sum_{k=0}^m \sum_{|N(\chi)| = k} |\hat{1_S}(\chi)|^2 \binom{n-k}{m-k}\\
	&= \sum_{k=0}^m \binom{n-k}{m-k} Q(k,n).
\end{align*}
By inverting this relation we have
\begin{equation}\label{Qmnsum}
	Q(m,n) = \sum_{k=0}^m (-1)^{m-k} \binom{n-k}{m-k} \frac{n^k}{k!} = \frac1{m!} \sum_{k=0}^m \binom{m}{k} (-1)^k (n-m+1)\cdots(n-k) n^k.
\end{equation}

This expression exhibits a vast amount of cancellation. To exploit this, we employ a generating function argument, followed by some complex analysis in the spirit of the saddle-point method (see~\cite[Chapter~VIII]{flajolet-sedgewick} for background); see also Remark \ref{remarkonmagicf} for some further intuition. 

Observe from~\eqref{Qmnsum} that $Q(m,n)$ is exactly the coefficient of $X^m$ in
\begin{equation}\label{magicf}
	f(X) = \frac{n^{n+1} e^X}{(n+X)^{n-m+1}}.
\end{equation}
Thus by Cauchy's formula
\[
	Q(m,n) = \frac{n^{n+1}}{i2\pi} \oint_{|z|=r} \frac{e^z}{(n+z)^{n-m+1} z^{m+1}} \, dz,
\]
for any $r$ in the range $0<r<n$, so
\[
	Q(m,n) \leq \frac{n^{n+1}}{r^m} \max_{|z|=r} \left|\frac{e^z}{(n+z)^{n-m+1}}\right|.
\]
We claim that $\left|e^z/(n+z)^{n-m+1}\right|$ has only two local maxima on $|z|=r$: one at $z=+r$ and the other at $z=-r$. To see this note that if $|z|=r$ and $\Re z = t$ then
\[
	\left|\frac{e^z}{(n+z)^{n-m+1}}\right|^2 = \frac{e^{2t}}{(n^2+r^2 + 2nt)^{n-m+1}},
\]
so
\[
	\frac{d}{dt}\log\left|\frac{e^z}{(n+z)^{n-m+1}}\right|^2 = 2 - \frac{2n(n-m+1)}{n^2+r^2+2nt}.
\]
This function has a unique pole at $t= -(n+r^2/n)/2$, which is to the left of the allowed region $-r\leq t \leq r$, and it has a unique zero somewhere corresponding to a minimum, not a maximum, so the claim holds. Thus $\left|e^z/(n+z)^{n-m+1}\right|$ is bounded on $|z|=r$ by
\[
	\frac{e^{\pm r}}{(n\pm r)^{n-m+1}},
\]
so
\[
	Q(m,n)
	\leq \frac{n^{n+1} e^{\pm r}}{r^m (n\pm r)^{n-m+1}}
	= \frac{e^{\pm r}}{(1\pm r/n)^{n-m+1}} \frac{n^m}{r^m}.
\]
Here
\begin{align*}
	\log\left(\frac{e^{\pm r}}{(1\pm r/n)^{n-m+1}}\right)
	&= \pm r - (n-m+1)(\pm r/n - r^2/2n^2 + O(r^3/n^3))\\
	&= r^2/2n + O(r^3/n^2 + rm/n).
\end{align*}
Taking $r = (mn)^{1/2}$ we thus have, by Stirling's formula,
\[
	Q(m,n) \leq e^{(1/2 + O((m/n)^{1/2}))m} \frac{n^{m/2}}{m^{m/2}} \leq O(m^{1/4}) e^{O(m^{3/2}/n^{1/2})} \binom{n}{m}^{1/2}.\qedhere
\]
\end{proof}

\begin{remark}\label{remarkonmagicf}
Although it is straightforward to verify from~\eqref{Qmnsum} that $Q(m,n)$ is the coefficient of $X^m$ in~\eqref{magicf}, this proof is unsatisfying in that one needs to know the formula for $f$ in advance. Here is an alternative proof of this fact which does not have this fault.

Let $\partial$ be the forward difference operator defined on functions $g:\N\to\R$ by
\[
	\partial g (k) = g(k+1)-g(k),
\]
and observe that
\[
	Q(m,n) = \frac1{m!} \partial^m g_{m,n}(0),
\]
where $g_{m,n}$ is the polynomial defined by
\[
	g_{m,n}(k) = n^k (n-k)(n-k-1)\cdots(n-m+1).
\]
Intuitively, the sum~\eqref{Qmnsum} exhibits enormous cancellation, precisely because it is an expression for a high derivative of a very ``smooth'' function $g_{m,n}$. To take advantage of this smoothness we investigate the first derivative $\partial g_{m,n}$. We compute
\begin{align*}
	\partial g_{m,n}(k)
	&= n^{k+1} (n-k-1) (n-k-2)\cdots(n-m+1) - n^k (n-k)(n-k-1)\cdots(n-m+1)\\
	&= k n^k (n-k-1) (n-k-2)\cdots(n-m+1)\\
	&= \frac{k}{n} g_{m,n}(k+1).
\end{align*}
From this it is possible to control the higher derivatives: using the Leibniz-type formula
\[
	\partial(gh)(k) = \partial g(k) h(k+1) + g(k) \partial h(k)
\]
we derive the recurrence
\begin{align*}
	\partial^\ell g_{m,n}(k)
	&= \partial^{\ell-1} \(\frac{k}{n} g_{m,n}(k+1)\)\\
	&= \frac{k}{n} \partial^{\ell-1} g_{m,n}(k+1) + \frac{\ell-1}{n} \partial^{\ell-2} g_{m,n}(k+2),
\end{align*}
which holds for $\ell\geq 2$. In particular, letting
\[
	a_\ell = \frac1{\ell!} \partial^\ell g_{m,n}(m-\ell),
\]
we have the recurrence
\begin{equation}\label{aell-recurrence}
	\ell a_\ell = \frac{m-\ell}n a_{\ell-1} + \frac1n a_{\ell-2}
\end{equation}
for $(a_\ell)_{\ell\geq 0}$. Observe that $a_0 = n^m$, $a_1=(m-1)n^{m-1}$, and that $Q(m,n) = a_m$.

In Section~\ref{sec:linfty} we will bound solutions to recurrences like~\eqref{aell-recurrence} in a direct fashion. For~\eqref{aell-recurrence} itself, it is convenient to employ a generating function technique. For an indeterminate $X$ put
\[
	f(X) = \sum_{\ell=0}^\infty a_\ell X^\ell,
\]
and observe by multiplying~\eqref{aell-recurrence} by $X^{\ell-1}$ and summing for $\ell\geq 2$ that
\[
	f'(X) - a_1 = \frac{m-1}{n} (f(X)-a_0) - \frac1n Xf'(X) + \frac1n X f(X) \ .
\]
By inputting $a_0 = n^m$ and $a_1 = (m-1)n^{m-1}$ and rearranging we arrive at
\[
	(n-X) f'(X) = (m-1+X) f(X).
\]
The formula~\eqref{magicf} is obtained by solving this differential equation.
\end{remark}

\section{An \texorpdfstring{$L^\infty$}{L-infty} bound for low-entropy minor arcs}
\label{sec:linfty}

The main result of this section is the following proposition.

\begin{proposition}\label{rudiLinfty}
Let $\chi$ be a character with exactly $m$ nonzero coordinates, where $m \leq n/3$. Then
\[
  |\hat{1_S}(\chi)| \leq e^{O(m^{3/2}/n^{1/2} + m^{1/2})}\cdot 2^{-m/2}\binom{n}{m}^{-1/2} \cdot \frac{n!}{n^n}.
\]
\end{proposition}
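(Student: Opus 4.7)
The plan is to adapt the generating-function/saddle-point strategy of Theorem~\ref{sparseval} to a single Fourier coefficient. By permutation-invariance of $\hat{1_S}$, we may assume $\chi = (r_1,\dots,r_m,0,\dots,0)$ with each $r_i\in\hat G$ nonzero. Using the formula~\eqref{eqn:sparse-identity}, the claim reduces to showing
\[
|V_m| \le e^{O(m^{3/2}/n^{1/2} + m^{1/2})}\,2^{-m/2}\, m!\,\binom{n}{m}^{1/2},
\]
where $V_m = \sump_{x_1,\dots,x_m} e(-r_1 x_1-\cdots-r_m x_m)$.

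The second step is to represent $V_m$ as a single coefficient of a polynomial. Namely $V_m = [z_1\cdots z_m]\,F(z)$, where
\[
F(z_1,\dots,z_m) = \prod_{x\in G}\Bigl(1 + \sum_{i=1}^m e(-r_i x)z_i\Bigr).
\]
Parametrising $z_i = \rho e^{i\theta_i}$ on the polytorus of radius $\rho$ and extracting the coefficient via Cauchy's formula gives
\[
|V_m| \le \rho^{-m}\,\bigl(\E_\theta\,|F(\rho e^{i\theta_1},\dots,\rho e^{i\theta_m})|^2\bigr)^{1/2},
\]
with $\E_\theta |F|^2 = \sum_\alpha |[z^\alpha]F|^2\rho^{2|\alpha|}$ by Parseval in $\theta$. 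Given a sharp enough bound on this $L^2$ quantity, one then optimises $\rho$: a saddle-point analysis analogous to the choice $r = (mn)^{1/2}$ in Theorem~\ref{sparseval} suggests $\rho^2\sim m/n$, and should produce the $e^{O(m^{3/2}/n^{1/2})}$ error factor, with the further $e^{O(m^{1/2})}$ accounting for Stirling-type corrections.

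\emph{Main obstacle.} The key difficulty is bounding $\E_\theta|F|^2$ sharply enough to recover the $2^{-m/2}$ savings. The AM--GM bound $|F|^2 \le (1+m\rho^2)^n$ underlying Theorem~\ref{SRH} only gives $|V_m|\lesssim e^{m/2}\, n^{m/2}$, missing a factor of roughly $\sqrt{m!/(2e)^m}$ in the interesting small-$m$ regime. Doing better will require exploiting the vanishings $[z_i]F = \sum_x e(-r_i x)=0$ and, more generally, the killing-partition structure of~\eqref{partitionformula}, which forces most low-order coefficients of $F$ to vanish. In the spirit of Remark~\ref{remarkonmagicf}, a natural route is to derive a second-order recurrence for a suitable sequence of auxiliary quantities terminating at $|V_m|^2$ (or at $\sum_\alpha|[z^\alpha]F|^2\rho^{2|\alpha|}$), and then bound its solutions by hand rather than via a generating-function equation, replicating at the $L^\infty$ level the analysis which produced Theorem~\ref{sparseval} at the $L^2$ level.
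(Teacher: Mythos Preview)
Your proposal is a plan rather than a proof, and the plan has a genuine gap at exactly the point you flag as the ``main obstacle''. The Cauchy--on--the--polytorus bound
\[
  |V_m| \le \rho^{-m}\Bigl(\E_\theta |F(\rho e^{i\theta})|^2\Bigr)^{1/2}
\]
is, after a change of variables, essentially the same mechanism as the Gaussian integral in Theorem~\ref{SRH}: both amount to bounding a single coefficient by an $L^2$ norm of the generating product. As the Remark after Theorem~\ref{SRH} shows, that bound is asymptotically \emph{sharp} for characters such as $\chi=((1/2)^m,0^{n-m})$ when $n$ is even. Hence no argument of this shape can produce the extra $2^{-m/2}$ unless it uses, somewhere, that $G$ has no $2$-torsion. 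Your sketch never invokes the odd-order hypothesis, and the ``killing-partition vanishings'' you mention (e.g.\ $[z_i]F=0$) hold equally for even $n$, so they cannot be the source of the saving. The vague final suggestion of a second-order recurrence for $|V_m|^2$ or for $\E_\theta|F|^2$ does not obviously help either: those quantities are symmetric in the $r_i$ and insensitive to whether $r_i=-r_i$ can occur.

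The paper's proof takes a completely different route. It establishes the exact recursion
\[
  \hat{1_S}(r_1,\dots,r_m,0,\dots,0) = -\frac{1}{n-m+1}\sum_{i=1}^{m-1}\hat{1_S}(r_1,\dots,r_i+r_m,\dots,r_{m-1},0,\dots,0),
\]
obtained by summing over $x_m$ first. Each term on the right has either $m-1$ or $m-2$ nonzero coordinates, the latter occurring precisely when $r_i=-r_m$. The key combinatorial step --- and this is where odd order enters --- is that one may reorder so that $r_m$ minimises $|\{j:r_j=-r_m\}|$, and since $r_m\ne -r_m$ this set has size at most $m/2$. Writing $U_m$ for the maximum of $|\hat{1_S}|$ over $m$-sparse characters, this gives
\[
  U_m \le \frac{1}{n-m+1}\max\Bigl\{(m-1)U_{m-1},\ \tfrac{m}{2}U_{m-2}+\bigl(\tfrac{m}{2}-1\bigr)U_{m-1}\Bigr\}.
\]
The $2^{-m/2}$ saving is then extracted by rescaling and bounding the operator norms of the $2\times 2$ transfer matrices in the resulting product. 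So the missing idea in your approach is precisely this: a recursion in $m$ for the \emph{linear} quantity $\hat{1_S}$ (not its square), together with the odd-order pigeonhole that limits the number of ``bad'' collapses to $m/2$.
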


\begin{remark}
  It is instructive to compare this bound to what one can deduce from Theorem~\ref{SRH}.  If $\chi$ has exactly $m$ nonzero coordinates and takes precisely $k$ distinct nonzero values, i.e., $\chi = (r_1^{a_1}, \dots, r_k^{a_k}, 0^{n-m})$ where $\sum a_i = m$, then Theorem~\ref{SRH} gives us a bound of
  \[
    |\hat{1_S}(\chi)| \leq \binom{n+k}{k}^{1/2} \binom{n}{a_1,\dots,a_k,n-m}^{-1/2} \frac{n!}{n^n}
  \]
  and the worst case of this bound over all choices of $k \le m$ and $a_1, \dots, a_k$ can be computed approximately as
  \[
    |\hat{1_S}(\chi)| \leq \binom{n}{m}^{-1/2} e^{o(m)} \frac{n!}{n^n},
  \]
  provided that $m$ is significantly greater than $\sqrt{n}$. In other words a character similar to $\chi = (r^{m}, 0^{n-m})$ is asymptotically about as bad as the worst case. Moreover if we allow $n$ to be even and set $r=1/2$ then the bound from Theorem~\ref{SRH} is essentially sharp, as remarked in that section.
  
  However, in this setting, Proposition~\ref{rudiLinfty} improves on this bound by a significant factor of $2^{-m/2}$. Clearly then such an improvement is only possible under the assumption that $n$ is odd, and the proof of Proposition~\ref{rudiLinfty} must exploit this assumption in a fundamental way.
  Since we do not know how to adapt the proof of Theorem~\ref{SRH} to exploit the absence of $2$-torsion, we employ a more specialized approach that suffices in the sparse regime.
  
We observe that this is the unique stage of the proof in which we need the full strength of the hypothesis that $G$ has odd order. Elsewhere we only need to assume that $\sum_{x\in G} x = 0$.
\end{remark}

\begin{proof}[Proof of Proposition~\ref{rudiLinfty}]

  Let $\chi = (r_1, \dots, r_m, 0^{n-m})$.  The key tool in our proof is an exact recursive formula for Fourier coefficients $\widehat{1_S}(\chi)$, in terms of related Fourier coefficients for which $m' < m$.  Specifically, as long as $r_m$ is nonzero we have
  \begin{align}  \label{fivepointone}
    \hat{1_S}(\chi)
    &= \frac{(n-m)!}{n^n}\sump_{x_1,\dots,x_m} e(-r_1x_1-\cdots-r_mx_m)\nonumber\\
    &=\frac{(n-m)!}{n^n}\sump_{x_1,\dots,x_{m-1}}\sum_{x_m\neq x_1,\dots,x_{m-1}} e(-r_1x_1-\cdots-r_mx_m)\nonumber\\
    &=-\frac{(n-m)!}{n^n}\sump_{x_1,\dots,x_{m-1}} \sum_{x_m=x_1,\dots,x_{n-1}} e(-r_1x_1-\cdots-r_mx_m)\nonumber\\
    &=-\frac1{n-m+1}\sum_{i=1}^{m-1} \hat{1_S}(r_1,\dots,r_i+r_m,\dots,r_{m-1},0,\dots,0).
  \end{align}
  As an aside, we remark that this formula can be used for efficient explicit computation of certain kinds of Fourier coefficient.

  Let $U_m$ be the maximal value of the $|\hat{1_S}|$ taken over all characters with exactly $m$ nonzero coordinates.  If we apply the triangle inequality to the right hand side of (\ref{fivepointone}) and bound each $|\widehat{1_S}(\chi')|$ appearing there by the appropriate value $U_{m'}$ (where $m' < m$), we obtain recursive bounds on the $U_m$.

  However, there is a subtlety in this process: the number of nonzero coordinates of
  \[
    (r_1,\dots,r_i+r_m,\dots,r_{m-1},0,\dots,0)
  \]
  might be either $(m-1)$ (if $r_i + r_m \ne 0$) or $(m-2)$ (if $r_i + r_m = 0$), and we do not have much control over which occurs.  Since we expect $U_{m-1} < U_{m-2}$, it is in our interests to land in the former case as much as possible.  We have the freedom to reorder the $r_i$ before applying (\ref{fivepointone}), so our goal is to optimize the recursive bound over all choices of $r_m$.

  For each $i\leq m$ let $\mathcal{N}_i=\{j\leq m\colon r_j = -r_i\}$. By reordering, we may assume without loss of generality that $|\mathcal{N}_i|$ is minimized when $i=m$. Suppose $j\in \mathcal{N}_m$. Then $r_j = -r_m$, so since $|G|$ is odd we have $r_j\neq r_m$, so $\mathcal{N}_m\cap \mathcal{N}_j=\emptyset$. Thus by minimality $|\mathcal{N}_m|\leq m/2$. 

  Hence, applying \eqref{fivepointone} we deduce the bound
\begin{align*}
	|\widehat{1_S}(\chi)|
  &\leq \frac{|\mathcal{N}_m| U_{m-2} + (m-1-|\mathcal{N}_m|) U_{m-1}}{n-m+1}\\
  &\leq \frac1{n-m+1} \max\left\{ (m-1) U_{m-1},\  (m/2) U_{m-2} + (m/2-1) U_{m-1}\right\}
\end{align*}
where the former term in the maximum covers the case $U_{m-1} > U_{m-2}$ and the latter the more probable case $U_{m-1} \le U_{m-2}$. Hence
\begin{equation}  \label{um-recursion}
	U_m \le
  \frac1{n-m+1} \max\left\{ (m-1) U_{m-1},\  (m/2)U_{m-2} + (m/2-1) U_{m-1}\right\}.
\end{equation}

Our task is now to obtain bounds on $U_m$ by solving this recurrence, which -- ignoring the maximum -- resembles a kind of time-dependent Fibonacci sequence.

In Remark~\ref{remarkonmagicf} we dealt with a similar recurrence using generating function methods.  Here, the presence of the maximum of two alternatives, and possibly other reasons, make this approach less attractive.  Instead, we will bound \eqref{um-recursion} by more hands-on methods.

Specifically, we will phrase \eqref{um-recursion} in terms of a product of $2 \times 2$ matrices (as one might for the Fibonacci sequence), and control the $L^2$ norm of the result by bounding the $L^2 \to L^2$ operator norms of each matrix in the product.

Write $\alpha_m = \textstyle\frac{m}{2(n-m+1)}$, $\beta_m = \textstyle\frac{m-2}{2(n-m+1)}$, and $\gamma_m = \textstyle\frac{m-1}{n-m+1}$. Additionally, we work with a rescaled version $V_m=(\alpha_1\alpha_2\dots \alpha_m)^{-1/2} U_m$ of $U_m$, for $m\geq 1$.  From (\ref{um-recursion}) we deduce that
\begin{equation}\label{E:Vmrecursion}
  V_m \leq \max \left\{ \gamma_m\alpha_m^{-1/2}\, V_{m-1},\  \alpha_m^{1/2}\alpha_{m-1}^{-1/2}\, V_{m-2} + \beta_m\alpha_{m}^{-1/2}\, V_{m-1} \right \}
\end{equation}
holds for any $m \ge 3$.

If we define matrices
\[
 M_m = \left( \begin{array}{cc}
 \gamma_m\alpha_m^{-1/2} & 0 \\
 1 & 0
 \end{array} \right) \quad\text{and}\quad N_m = \left( \begin{array}{cc}
 \beta_m\alpha_m^{-1/2} & \alpha_m^{1/2}\alpha_{m-1}^{-1/2} \\
 1 & 0
 \end{array} \right),
\]
and vectors $v_m = (V_m, V_{m-1})^T$, we can write \eqref{E:Vmrecursion} equivalently as
\[
  v_m \leq \max\left\{ M_mv_{m-1}, N_mv_{m-1} \right\}.
\]

Using the easily obtainable bounds $\alpha_m\alpha_{m-1}^{-1} = 1+O(1/m)$ and $\beta_m^2\alpha_m^{-1}\leq m/n$, we get that
\[
  \trace\left(N_m^T N_m\right) \leq 2 + m/n + O(1/m)
\]
and
\[
  \det\left(N_m^T N_m\right) = 1 + O(1/m),
\]
and so by considering the singular values of $N_m$, we get the operator norm bound
\[
  \|N_m\|_{L^2 \to L^2} \leq 1+O((m/n)^{1/2} + m^{-1/2}).
\]

Completely analogously, using the easy bound $\gamma_m^2\alpha_m^{-1}\leq 4m/n$, we deduce
\[
  \|M_m\|_{L^2 \to L^2} \leq 1+O(m/n).
\]

Thus we have
\begin{align*}
  V_m 
  &\leq \sqrt{V_m^2 + V_{m-1}^2} \\
  &\leq \left( \prod_{r=3}^m \max\left\{ \|M_r\|_{L^2 \to L^2}, \|N_r\|_{L^2 \to L^2} \right\} \right) \cdot \sqrt{V_2^2+V_1^2} \\
  &\leq \left( \prod_{r=3}^m \left(1+O\left((r/n)^{1/2} + r^{-1/2} \right) \right) \right) \cdot \sqrt{V_2^2+V_1^2}.
\end{align*}

We already know from Section~\ref{idiotsguide} that $U_1=0$ and it is evident from \eqref{fivepointone} that $U_2 =O(n!/n^{n+1})$. Hence, we can bound the term $\sqrt{V_2^2+V_1^2}$ in the above inequality by $O(n!/n^n)$, which gives
\[
  V_m \leq e^{O\left(m^{3/2}/n^{1/2} + m^{1/2}\right)}\cdot \frac{n!}{n^{n}},
\]
and the claim stated in the proposition follows easily from this.
\end{proof}

\section{End of the argument}\label{sec:end}

To estimate the sum of cubes
\[
	\sum_{\chi\in\hat{G}^n} \hat{1_S}(\chi)^3
\]
we divide the set of all $\chi\in\hat{G}^n$ into three regions depending on the value of $H(\chi)$: a high-entropy range $H\geq 10$, a medium-entropy range $\eps\leq H \leq 10$, and a low-entropy range $H\leq \eps$. Here $\eps$ is a small positive constant which will be chosen at the end of the argument.

In the high-entropy range $H\geq 10$ it is enough to use the bound from Corollary \ref{cor:crappy-minor-arcs},
\[
	\sum_{\chi \colon H(\chi)\geq 10} |\hat{1_S}(\chi)|^3
	\leq e^{-3.5n} n!^3/n^{3n}.
\]

In the medium-entropy range $\eps\leq H\leq 10$, we first need a bound for the number of characters of a given entropy.
	
\begin{lemma}
  \label{number-of-characters-H}
If $H\leq 10$ then the number of characters of entropy at most $H$ is bounded by $e^{Hn + o(n)}$.
\end{lemma}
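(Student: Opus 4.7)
The plan is to classify characters $\chi \in \widehat{G}^n$ by \emph{shape}: the partition of $n$ given by the multiset of multiplicities of the distinct values appearing in $\chi$. Since the total number of partitions of $n$ is $p(n) = e^{O(\sqrt{n})} = e^{o(n)}$, it suffices to show that for each shape $T = \{a_1, \ldots, a_k\}$ with $H(T) \leq H \leq 10$ (where $k$ denotes the number of distinct multiplicities), the number $N_T$ of characters with that shape satisfies $\log N_T \leq nH + o(n)$, with $o(n)$ uniform in $T$.

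Writing $m_j = |\{i : a_i = j\}|$, so that $\sum_j m_j = k$ and $\sum_j j m_j = n$, one has
\[
N_T = \binom{n}{a_1, \ldots, a_k} \cdot \binom{n}{m_1, m_2, \ldots, n-k},
\]
where the first multinomial equals $e^{nH(T)}$ by definition of $H(T)$ and the second counts the ways to assign distinct values in $\widehat{G}$ to the $k$ parts. The first step is to bound $k$: splitting parts of $T$ into those of size $\geq \sqrt{n}$ (at most $\sqrt{n}$ of them, since $\sum a_i = n$) and those of size $<\sqrt{n}$ (each contributing at least $\tfrac{1}{2}(a_i/n)\log n$ to $H(T)$) gives $k \leq \sqrt{n} + 2nH/\log n = O(n/\log n)$ for $H \leq 10$.

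The heart of the argument is to bound $\log\binom{n}{m_1,\ldots,n-k}$ by $o(n)$. View $p_j := m_j/k$ as a probability distribution on the positive integers, with mean $n/k$; by the standard maximum-entropy calculation (the geometric distribution maximizes Shannon entropy on $\mathbb{Z}_{>0}$ with given mean), $H(p) \leq \log(n/k) + O(1)$. A careful Stirling expansion then yields
\[
\log\binom{n}{m_1, \ldots, n-k} = k\log(n/k) + kH(p) + o(n) \leq 2k\log(n/k) + O(k) + o(n),
\]
and since $k \leq Cn/\log n$ one has $2k\log(n/k) = O(n\log\log n/\log n) = o(n)$, uniformly for $H \leq 10$. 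Summing $N_T \leq e^{nH(T) + o(n)} \leq e^{nH + o(n)}$ over the $p(n) = e^{o(n)}$ partitions completes the argument.

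The delicate point is the maximum-entropy bound on the second multinomial: a naive Stirling estimate only bounds the entropy term $H(p)$ by $\log k \sim \log n$, which in the regime $k \sim n/\log n$ amounts to a loss of $O(n)$ and would therefore produce merely $e^{O(Hn)}$, insufficient for the intended application. Exploiting the mean constraint $\sum j p_j = n/k$ via the geometric max-entropy bound is what upgrades this to the required $e^{o(n)}$ saving. Everything else is routine Stirling bookkeeping once the bound $k = O(nH/\log n)$ on the number of distinct multiplicities is in place.
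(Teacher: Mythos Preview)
Your proof is correct, and it reaches the goal by a genuinely different route from the paper's argument, though both reduce to the same sub-claim: the number of \emph{orbits} (equivalently, count vectors $(a_r)_{r\in\hat G}$) of entropy at most $10$ is $e^{o(n)}$.

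The paper argues this directly by a threshold trick. Fixing $\delta>0$, it shows from $\prod_r a_r! \ge n! e^{-10n}$ that the parts $a_r$ with $a_r \le e^{-11/\delta}n$ have total mass at most $\delta n$, so at most $e^{11/\delta}$ parts are ``big''; one then counts choices of the big indices, the big values, and the small parts (the last via a stars-and-bars bound $\binom{n+\delta n-1}{\delta n-1}$), and lets $\delta\to 0$ slowly. Your route instead factors the orbit count as (number of shapes)~$\times$~(maximum number of value-assignments per shape). The first factor is at most $p(n)=e^{O(\sqrt n)}$; the real content is the bound on the second, where you Stirling-expand $\binom{n}{m_1,m_2,\dots,n-k}$ and invoke the fact that the geometric distribution maximises Shannon entropy on $\Z_{>0}$ with prescribed mean, giving $H(p)\le \log(n/k)+O(1)$ and hence $2k\log(n/k)+O(k)=o(n)$ once $k=O(n/\log n)$. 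This is more information-theoretic and arguably more conceptual; the paper's argument is shorter and entirely elementary, needing neither the partition-function bound nor the max-entropy lemma.

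Two small remarks. First, a wording slip: you say ``$k$ denotes the number of distinct multiplicities'', but throughout you in fact use $k$ as the number of parts of $T$ (equivalently, the number of distinct values of $\chi$), consistent with $\sum_j m_j=k$; the argument is correct with that reading. Second, the ``careful Stirling expansion'' does go through, but the point to note is that the number of $j$ with $m_j\ge 1$ is $O(\sqrt n)$ (distinct positive integers that are part-sizes of a partition of $n$), so the accumulated Stirling error terms are $O(\sqrt n\log n)=o(n)$ rather than the naive $O(k\log n)$, which would be $O(n)$.
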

\begin{proof}
Every character of entropy at most $H$ has an orbit under the permutation action of size at most $e^{Hn}$, so it suffices to show that there are only $e^{o(n)}$ such orbits of characters of entropy at most $10$. Every orbit is uniquely specified by giving the number $a_r$ of appearances of $r$ for each $r\in\hat{G}$, so we must show that the number of nonnegative integer vectors $(a_r)_{r\in\hat{G}}$ with $\sum_r a_r = n$ and satisfying
\[
	\frac{n!}{\prod_{r\in\hat{G}} a_r!} \leq e^{10n}
\]
is $e^{o(n)}$. Let $\delta>0$ be a small constant, and let $t$ be the sum of the $a_r$ for which $a_r \leq e^{-11/\delta}n$. Then
\[
	e^{-11n} n^n \leq e^{-10n} n! \leq \prod_{r\in\hat{G}} a_r! \leq \prod_{r\in\hat{G}} a_r^{a_r} \leq (e^{-11/\delta} n)^t n^{n-t} = e^{-11t/\delta} n^n,
\]
so $t\leq \delta n$. Since at most $e^{11/\delta}$ of the $a_r$ are bigger than $e^{-11/\delta} n$, the number of $(a_r)_{r\in\hat{G}}$ is bounded by the number of ways of choosing the set $B$ of at most $e^{11/\delta}$ indices $r$ for which $a_r$ is big, times the number of ways of choosing these $a_r$, times the number of ways of choosing $(a_r)_{r\notin B}$ so that $\sum_{r\notin B} a_r = n - \sum_{r\in B} a_r \leq \delta n$. This is bounded by
\[
	n^{e^{11/\delta}} n^{e^{11/\delta}} \binom{n+\delta n-1}{\delta n - 1}.
\]
The claimed estimate now follows by applying Stirling's formula and choosing $\delta$ appropriately.
\end{proof}
	
We conclude by combining Lemma~\ref{number-of-characters-H} with Theorem~\ref{SRH}. Note that if the entropy of $\chi$ is $O(1)$ then the number of distinct coordinates of $\chi$ is $O(n/\log n)$, so Theorem~\ref{SRH} implies
\[
	|\hat{1_S}(\chi)| \leq e^{-H(\chi) n/2 + o(n)} n!/n^n.
\]
Thus for any $H\leq 10$ we have
\begin{align*}
	\sum_{\chi \colon 9H/10 < H(\chi)\leq H} |\hat{1_S}(\chi)|^3
	&\leq \left|\left\{\chi \colon H(\chi)\leq H \right\}\right| \max_{\chi \colon H(\chi)\geq 9H/10} |\hat{1_S}(\chi)|^3\\
	&\leq e^{-7 H n/20 + o(n)} n!^3/n^{3n}.
\end{align*}
Decomposing the range $[\eps, 10]$ into ranges of this type and bounding crudely, we obtain
\[
\sum_{\chi \colon \eps \leq H(\chi)\leq 10} |\hat{1_S}(\chi)|^3
	\leq O(\log(1/\eps)) e^{-7\eps n/20 + o(n)} n!^3/n^{3n}.
\]

In the low-entropy range $H\leq \eps$, one can easily verify that $\chi = (r_1,\dots,r_n)$ must repeat some coordinate $r\in\hat{G}$ at least $(1-\eps)n$ times. Thus by global shift-invariance of $\hat{1_S}$ we have
\begin{equation} \label{eqn:low-entropy-splitting}
  \sum_{\chi \colon H(\chi)\leq \eps} \hat{1_S}(\chi)^3 = n \sum_{m=0}^{\eps n}\ \sum_{\substack{m\text{-sparse }\chi \\ H(\chi)\leq \eps}} \hat{1_S}(\chi)^3.
\end{equation}
By combining Proposition~\ref{rudiLinfty} with Theorem~\ref{sparseval} we have that for any $m \le \eps n$,
\begin{align*}
  \sum_{m\text{-sparse }\chi } |\hat{1_S}(\chi)|^3
  &\leq \(\max_{m\text{-sparse }\chi} |\hat{1_S}(\chi)|\) \sum_{m\text{-sparse } \chi} |\hat{1_S}(\chi)|^2\\
	&\leq e^{O\left(m^{3/2}/n^{1/2} + m^{1/2}\right)} 2^{-m/2} \frac{n!^3}{n^{3n}}\\
	&\leq e^{O\left(\eps^{1/2}m + m^{1/2}\right)} 2^{-m/2} \frac{n!^3}{n^{3n}}.
\end{align*}
As long as $\eps$ is sufficiently small depending on the constant implicit in the $O(\eps^{1/2}m)$ term, this is negligible except when $m$ has size $O(1)$.  In this range, we can apply Proposition~\ref{majorarcs}.  We introduce a further parameter $M$ and split the sum (\ref{eqn:low-entropy-splitting}) into the ranges $1 \le m \le 2M$ and $2M < m \le \eps n$, to obtain
\begin{align*}
  \sum_{\chi \colon H(\chi)\leq \eps} \hat{1_S}(\chi)^3
  &= n \sum_{m=0}^M \frac{(-1)^m}{2^m m!} \frac{n!^3}{n^{3n}} + n O_M\(\frac1n \frac{n!^3}{n^{3n}}\)\\
  &\qquad + O\left(n \sum_{m=2M}^{\eps n} e^{O\left(\eps^{1/2}m + m^{1/2}\right)} 2^{-m/2} \frac{n!^3}{n^{3n}} \right) \ .
\end{align*}

Finally, by combining the three bounds we have just proved for each range $H \le \eps$, $\eps \le H \le 10$ and $H \ge 10$, we deduce
\begin{align*}
	\sum_{\chi\in\hat{G}^n} \hat{1_S}(\chi)^3
	&= n \sum_{m=0}^M \frac{(-1)^m}{2^m m!} \frac{n!^3}{n^{3n}} + n O_M\(\frac1n \frac{n!^3}{n^{3n}}\)\\
  &\qquad + O\left( n \sum_{m=2M}^{\eps n} e^{O\left(\eps^{1/2}m + m^{1/2}\right)} 2^{-m/2} \frac{n!^3}{n^{3n}} \right)\\
  &\qquad + O\left( \log(1/\eps) e^{-7\eps n/20 + o(n)} \frac{n!^3}{n^{3n}} \right) \\
  &\qquad + O\left(e^{-3.5n} \frac{n!^3}{n^{3n}}\right).
\end{align*}
Theorem~\ref{supercooltheorembetterthanfermatslast} now follows by choosing $\eps$ and $M$ appropriately.

\bibliography{references}

\begin{thebibliography}{CGKN99}

\bibitem[B{\'o}n15]{handbookenumerative}
M.~B{\'o}na.
\newblock {\em Handbook of Enumerative Combinatorics}.
\newblock Chapman and Hall/CRC, 2015.

\bibitem[CGKN99]{coopersimulations}
C.~Cooper, R.~Gilchrist, I.~N. Kovalenko, and D.~Novakovi\'{c}.
\newblock Deriving the number of ``good'' permutations, with applications to
  cryptography.
\newblock {\em Kibernet. Sistem. Anal.}, (5):10--16, 187, 1999.

\bibitem[CK96]{cooperkovalenko}
C.~Cooper and I.~N. Kovalenko.
\newblock The upper bound for the number of complete mappings.
\newblock {\em Theory Probab. Math. Statist.}, 53:77--83, 1996.

\bibitem[Coo00]{cooperlower}
C.~Cooper.
\newblock A lower bound for the number of good permutations.
\newblock {\em Data Recording, Storage and Processing (Nat. Acad. Sci.
  Ukraine)}, 213:15--25, 2000.

\bibitem[CW10]{cavenaghwanless}
N.~J. Cavenagh and I.~M. Wanless.
\newblock On the number of transversals in {C}ayley tables of cyclic groups.
\newblock {\em Discrete Appl. Math.}, 158(2):136--146, 2010.

\bibitem[FS09]{flajolet-sedgewick}
P.~Flajolet and R.~Sedgewick.
\newblock {\em Analytic combinatorics}.
\newblock Cambridge University Press, Cambridge, 2009.

\bibitem[GL15]{glebovluria}
R.~Glebov and Z.~Luria.
\newblock On the maximum number of latin transversals.
\newblock 2015.
\newblock Preprint available at http://arxiv.org/pdf/1506.00983.pdf.

\bibitem[Kov96]{kovalenko}
I.~N. Kovalenko.
\newblock On an upper bound for the number of complete mappings.
\newblock {\em Kibernet. Sistem. Anal.}, (1):81--85, 188, 1996.

\bibitem[Kuz07]{kuznetsovone}
N.~Yu. Kuznetsov.
\newblock Applying fast simulation to find the number of good permutations.
\newblock {\em Cybernet. Systems Anal.}, 43:830--837, 2007.

\bibitem[Kuz08]{kuznetsovtwo}
N.~Yu. Kuznetsov.
\newblock Estimating the number of good permutations by a modified fast
  simulation method.
\newblock {\em Cybernet. Systems Anal.}, 44:547--554, 2008.

\bibitem[Kuz09]{kuznetsovthree}
N.~Yu. Kuznetsov.
\newblock Estimating the number of latin rectangles by the fast simulation
  method.
\newblock {\em Cybernet. Systems Anal.}, 45:69--75, 2009.

\bibitem[MMW06]{mckay}
B.~D. McKay, J.~C. McLeod, and I.~M. Wanless.
\newblock The number of transversals in a {L}atin square.
\newblock {\em Des. Codes Cryptogr.}, 40(3):269--284, 2006.

\bibitem[RVZ94]{rivinvardizimmermann}
I.~Rivin, I.~Vardi, and P.~Zimmermann.
\newblock The {$n$}-queens problem.
\newblock {\em Amer. Math. Monthly}, 101(7):629--639, 1994.

\bibitem[Slo]{oeis}
N.~J.~A. Sloane.
\newblock The on-line encyclopedia of integer sequences.
\newblock \url{https://oeis.org/}.

\bibitem[Tar15]{taranenko}
A.~A. Taranenko.
\newblock Multidimensional permanents and an upper bound on the number of
  transversals in {L}atin squares.
\newblock {\em J. Combin. Des.}, 23(7):305--320, 2015.

\bibitem[Var91]{vardi}
I.~Vardi.
\newblock {\em Computational recreations in {M}athematica}.
\newblock Addison-Wesley Publishing Company, Advanced Book Program, Redwood
  City, CA, 1991.

\bibitem[Wan11]{wanless}
I.~M. Wanless.
\newblock Transversals in {L}atin squares: a survey.
\newblock In {\em Surveys in combinatorics 2011}, volume 392 of {\em London
  Math. Soc. Lecture Note Ser.}, pages 403--437. Cambridge Univ. Press,
  Cambridge, 2011.

\end{thebibliography}
\bibliographystyle{alpha}
\end{document}